\theoremstyle{plain}
\newtheorem{theo}{Theorem}[section]
\newtheorem*{theo*}{Theorem}
\newtheorem{prop}{Proposition}[section]
\newtheorem{cor}{Corollary}[section]
\newtheorem*{lem*}{Lemma}
\theoremstyle{definition}
\newtheorem{lem}{Lemma}[section]
\newtheorem{example-condition}{Example-Condition}[section]
\theoremstyle{remark}
\newtheorem*{ack}{Acknowledgements}
\newtheorem*{rem*}{Remark}
\newcommand{\R}{\mathbb{R}}
\newcommand{\T}{\mathbb{T}}
\newcommand{\Z}{\mathbb{Z}}
\newcommand{\C}{\mathbb{C}}
\newcommand{\N}{\mathbb{N}}
\begin{document}

\bibliographystyle{unsrt}

\abstract
By application of KAM theorem to Lidov-Ziglin's global study of the quadrupolar approximation of the spatial lunar three-body problem, we establish the existence of several families of quasi-periodic orbits in the spatial lunar three-body problem.
\endabstract

\title{Quasi-periodic Solutions of the Spatial Lunar Three-body Problem}

\date\today
\author{Lei Zhao}
\address{ASD, IMCCE, Observatoire de Paris, 77 Avenue Denfert-Rochereau 75014 Paris, France / Université Paris Diderot }
\email{zhaolei@imcce.fr}
\maketitle

\tableofcontents

\section{Introduction}
We consider the spatial lunar three-body problem, in which two bodies form a binary system (the inner pair) which is slightly perturbed by the third far-away body. By considering two relative positions, the study of the three-body problem is reduced to the study of the motions of these relative positions. The relative position of the inner pair and the relative position of the third body with respect to the mass center of the inner pair thus   describe almost Keplerian orbits that we suppose to be elliptic, with instantaneous semi major axes $a_{1} << a_{2}$. We decompose the Hamiltonian $F$ as 
$$F=F_{Kep}+F_{pert},$$
in which $F_{Kep}$ describes two uncoupled elliptic Keplerian motions, and $F_{pert}$ (which causes the secular evolutions of the elliptic orbits) is small provide that $a_{1}$ is sufficiently small compared to $a_{2}$. 

Due to the proper-degeneracy of the Kepler problem (all of its bounded orbits are closed), to study the dynamics of $F$ by a perturbative study of $F_{Kep}$, it is important to study the higher order effect of the (slow) secular evolution of the Keplerian orbits given by the perturbation. In the lunar three-body problem, the smallness of $a_{1}$ with respect to $a_{2}$ implies the lack of all lower-order resonances of the two Keplerian frequencies. As a result, the corresponding elimination procedure does not require any arithmetic conditions on the two Keplerian frequencies, and the secular evolution of the Keplerian orbits is approximately given by the \emph{secular systems}, which are the successive averaged systems of $F_{pert}$ over the two fast Keplerian angles. 

In the planar case, the 4-degree-of-freedom secular systems are invariant under the Keplerian $\T^{2}$-action and the $\hbox{SO(2)}$-action of rotations, and are thus integrable, which allows J. Féjoz to carry out a global study of their dynamics and establish families of quasi-periodic orbits of the planar three-body problem in \cite{QuasiMotionPlanar}. The integrability of the secular systems is no longer guaranteed in the spatial case, which has 6 degrees of freedom with (only) a $\T^{2} \times \hbox{SO(3)}$-symmetry. Nevertheless, if we expand the secular systems in powers of the semi major axis ratio $\alpha=\dfrac{a_{1}}{a_{2}}$ (the natural small parameter of the lunar problem), then the first non-trivial coefficient $F_{quad}$, \emph{i.e.} the \emph{quadrupolar system}, is noticed by Harrington \cite{Harrington1968} to have an additional $\hbox{SO(2)}$-symmetry giving rise to an additional first integral $G_{2}$ (the norm of the outer angular momentum), and is thus integrable. Integrable approximating systems can thus be obtained by a further single-frequency averaging procedure over the conjugate angle of $G_{2}$.

Aside from the degenerate inner ellipses, the dynamics of the quadrupolar system is studied globally by Lidov-Ziglin \cite{LidovZiglin} (see also \cite{FerrerOsacar}, \cite{FaragoLaskar}). By verifying of the required non-degeneracy conditions and applying a sophisticated iso-chronous version of the KAM theorem, we confirm that almost every invariant Lagrangian tori and some normally elliptic invariant isotropic tori of the quadrupolar system give rise to irrational invariant tori of the spatial lunar three-body problem after being reduced by the $\hbox{SO(3)}$-symmetry (Theorems \ref{FarFromCollisionMotions} and \ref{elliptictori}). These results extend some former results of Jefferys-Moser \cite{JefferysMoser} of the spatial lunar three-body problem concerning normally hyperbolic invariant isotropic tori to the normally elliptic tori and the Lagrangian tori. A theorem of J. P{\"o}schel confirms that each of these Lagrangian tori is accumulated by periodic orbits in this reduced system (Theorem \ref{AccumulatingPeriodicOrbits}).

By application of KAM theorems, the existence of various families of quasi-periodic solutions of the Newtonian $N$-body problem, planar or spatial, was shown in \emph{e.g.} \cite{Arnold1963}, \cite{JefferysMoser}, \cite{Lieberman}, \cite{Robutel1995}, \cite{BiascoChierchiaValdinoci}, \cite{QuasiMotionPlanar}, \cite{FejozStability}, \cite{ChierchiaPinzariPlanetary}, \cite{MeyerPalacianYanguas2011}. Due to the frequent non-integrability of the approximating systems, most of these work are local studies in some neighborhoods of the phase space, with the only exception of \cite{QuasiMotionPlanar}. Based on a global study of the secular dynamics of the lunar case, our results also extend Féjoz's families of quasi-periodic solutions of the planar secular systems to the spatial lunar case.

\section{Hamiltonian Formalism of the Three-body Problem} \label{Section: Formulation}
\subsection{The Hamiltonian System}
We study the Hamiltonian system with phase space
$$\Pi:=\left\{(p_{j}, q_{j})_{j=0,1,2}=(p_{j}^{1}, p_{j}^{2}, p_{j}^{3}, q_{j}^{1}, q_{j}^{2}, q_{j}^{3}) \in (\R^{3} \times \R^3)^3 |\,  \forall 0 \leq j \neq k \leq 2, q_j \neq q_k \right\}, $$
(standard) symplectic form 
$$\sum^{2}_{j=0} \sum^{3}_{l=1} d p_j^l \wedge d q_j^l,$$
and the Hamiltonian function
$$F=\dfrac{1}{2} \sum_{0 \le j \le 2} \dfrac{\|p_j\|^2}{m_j} -  \sum_{0 \le j < k \le 2} \dfrac{m_j m_k}{\|q_j- q_k\|},$$
in which $q_0,q_1,q_2$ denote the positions of the three particles, and $p_0,p_1,p_2$ denote their conjugate momenta respectively. The Euclidean norm of a vector in $\R^{3}$ is denoted by $\|\cdot\|$. The gravitational constant has been set to $1$.

\subsection{Jacobi Decomposition}

The Hamiltonian $F$ is invariant under translations in positions. To symplectically reduce the system by this symmetry, we {switch} to the \emph{Jacobi coordinates} $(P_i, Q_i),i=0, 1, 2,$ with
\begin{equation*} 
\left\{
\begin{array}{l} P_0=p_0+p_1+ p_2 \\ P_1=p_1+ \sigma_1 p_2\\ P_2 = p_2
\end{array} \right.
\hbox{ \phantom{aaaaaaaqqqqaa}}
\left\{
\begin{array}{l} Q_0=q_0 \\ Q_1=q_1- q_0 \\ Q_2=q_2-\sigma_0 q_0-\sigma_1 q_1,
\end{array} \right.
\end{equation*}
in which
$$\dfrac{1}{\sigma_0}=1+\dfrac{m_1}{m_0},  \dfrac{1}{\sigma_1}=1+\dfrac{m_0}{m_1}.$$
The Hamiltonian $F$ is thus independent of $Q_{0}$ due to the symmetry. We fix $P_{0}=0$ and reduce the translation symmetry by eliminating $Q_{0}$. 
In the (reduced) coordinates $(P_i, Q_i),i=1, 2$, the function $F=F(P_{1}, Q_{1}, P_{2}, Q_{2})$ describes the motions of two fictitious particles. 

We further decompose the Hamiltonian $F(P_{1}, Q_{1}, P_{2}, Q_{2})$ into two parts $F=F_{Kep}+F_{pert}$, where the \emph{Keplerian part} $F_{Kep}$ and the \emph{perturbing part} $F_{pert}$ are respectively
\begin{align*}
&F_{Kep}=\dfrac{\|P_1\|^2}{2 \mu_1}+\dfrac{\|P_2\|^2}{2 \mu_2}-\dfrac{\mu_1 M_1}{\|Q_1\|}-\dfrac{\mu_2 M_2}{\|Q_2\|},
  \\&F_{pert}=-\mu_1 m_2\Bigl[\dfrac{1}{\sigma_o}\bigl(\dfrac{1}{\|Q_2-\sigma_0 Q_1\|}-\dfrac{1}{\|Q_2\|}\bigr)+\dfrac{1}{\sigma_1}\bigl(\dfrac{1}{\|Q_2+\sigma_1 Q_1\|}-\dfrac{1}{\|Q_2\|}\bigr)\,\Bigr],
 \end{align*} \label{Not: pert part}
 with (as in \cite{QuasiMotionPlanar})
 \begin{align*}&\dfrac{1}{\mu_1}=\dfrac{1}{m_0}+\dfrac{1}{m_1}, \, \dfrac{1}{\mu_2}=\dfrac{1}{m_0+m_1}+\dfrac{1}{m_2},\\ & M_1=m_0+m_1, M_2=m_0+m_1+m_2.
 \end{align*} 

We shall only be interested in the region of the phase space where $F=F_{Kep}+F_{pert}$ is a small perturbation of a pair of Keplerian elliptic motions.  Let $a_1,a_2$ be the semi major axes of the (instanteneous) inner and outer ellipses respectively. We shall further restrict us to the lunar three-body problem, characterized by the fact that the \emph{ratio of the semi major axes} by $\alpha=\dfrac{a_1}{a_2}$ is sufficiently small. 

\subsection{Delaunay Coordinates}\label{Subsection: Delaunay Coordinates}
The Delaunay coordinates 
$$(L_i,l_i,G_i,g_i,H_i,h_i),i=1,2$$ for both elliptic motions are defined as the following:
\begin{equation*} 
\left\{
\begin{array}{ll}L_i=\mu_i \sqrt{M_i} \sqrt{a_i}   & \hbox{circular angular momentum}\\ l_i  &\hbox{mean anomaly}\\ G_i = L_i \sqrt{1-e_i^2} &\hbox{angular momentum} \\g_i &\hbox{argument of pericentre} \\ H_i=G_i \cos i_i &\hbox{vertical component of the angular momentum} \\ h_i &\hbox{ longitude of the ascending node},
\end{array}\right.
\end{equation*}

\begin{figure}
\centering
\includegraphics[width=3in]{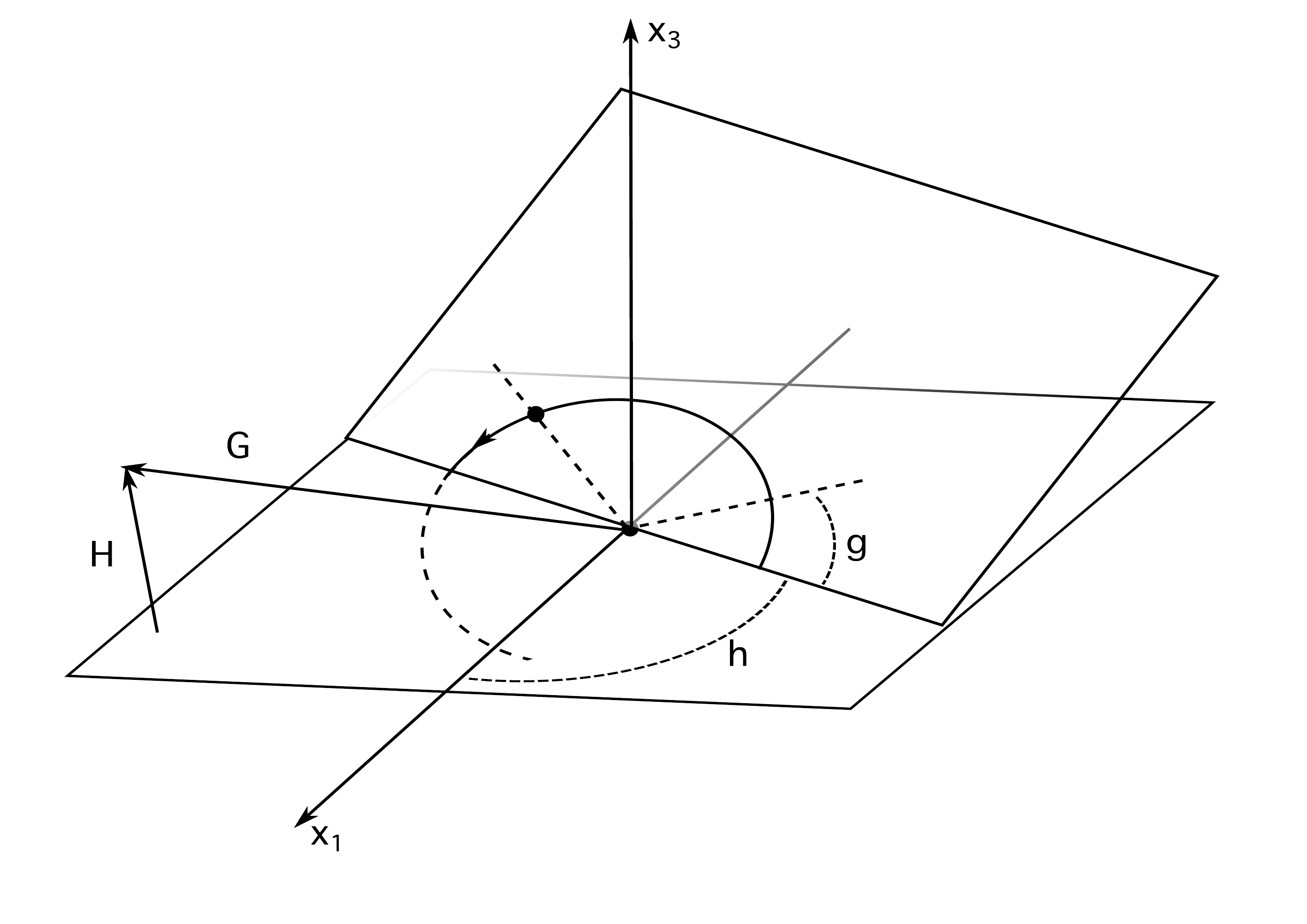}
\caption{Some Delaunay Variables}
\end{figure}
in which $e_{1}$, $e_{2}$ are the eccentricities and $i_1, i_2$ are the inclinations of the two ellipses respectively.  From their definitions, we see that these coordinates are well-defined only when neither of the ellipses is circular, horizontal or rectilinear. We refer to \cite{Lecons}, \cite{DelaunayPoincare} or \cite[Appendix A]{FejozHabilitation} for more detailed discussions of Delaunay coordinates.

In these coordinates, the Keplerian part $F_{Kep}$ is in the action-angle form
$$F_{Kep}=-\dfrac{\mu_{1}^{3} M_{1}^{2}}{2 L_{1}^{2}}-\dfrac{\mu_{2}^{3} M_{2}^{2}}{2 L_{2}^{2}}.$$
The proper degeneracy of the Kepler problem can be seen by the fact that $F_{Kep}$ depends only on 2 of the action variables out of 6. As a result, in order to study the dynamics of $F$, it is crucial to look at higher order effects arising from $F_{pert}$. 

\subsection{Reduction of the SO(3)-Symmetry: Jacobi's Elimination of the Nodes}
The group SO(3) acts on $\Pi$ by simultaneously rotating the positions $Q_{1}, Q_{2}$ and the momenta $P_{1}, P_{2}$ around the origin. This action is Hamiltonian for the standard symplectic form on $\Pi$ and it leaves the Hamiltonian $F$ invariant. Its moment map is the total angular momentum $\vec{C}=\vec{C_1}+\vec{C_2}$, in which $\vec{C}_{1}:=Q_1 \times P_1$ and $\vec{C}_{2}:=Q_2 \times P_2$.  To reduce $F$ by this SO(3)-symmetry, we fix $\vec{C}$ to a regular value (\emph{i.e.} $\vec{C} \neq \vec{0}$) and then reduce the system from the SO(2)-symmetry around $\vec{C}$. Finally, we obtain from $F$ a Hamiltonian system with 4 degrees of freedom.

\subsubsection{Jacobi's elimination of the nodes}
The plane perpendicular to $\vec{C}$ is invariant. It is called the \emph{Laplace plane}. We choose it to be the reference plane.

Since the angular momenta $\vec{C}_1$, $\vec{C}_2$ of the two Keplerian motions and the total angular momentum $\vec{C}=\vec{C}_1+\vec{C}_2$ must lie in the same plane, the node lines of the orbital planes of the two ellipses in the Laplace plane must coincide (\emph{i.e.} $h_{1}=h_{2}+\pi$).  Therefore, by fixing the Laplace plane as the reference plane, we can express $H_1,H_2$ as functions of $G_1$, $G_2$ and $C:=\|\vec{C}\|$:
$$H_{1}=\dfrac{C^{2}+G_{1}^{2}-G_{2}^{2}}{2 C}, H_{2}=\dfrac{C^{2}+G_{2}^{2}-G_{1}^{2}}{2 C};$$
since $\vec{C}$ is vertical, $d H_{1} \wedge d h_{1}+ d H_{2} \wedge d h_{2}=d C \wedge d h_{1}$.

We can then reduce the system by the SO(2)-symmetry around the direction of $\vec{C}$. The number of degrees of freedom of the system is then reduced from 6 to 4. 

\section{The Averaging Procedure} \label{SecularAndSecularIntegrableSystems}
In the lunar case, the two Keplerian frequencies do not appear at the same magnitude of the small parameter $\alpha$, which enables us to build normal forms up to any order, without necessarily considering the interaction between the two Keplerian frequencies by an asynchronous elimination procedure that we are going to describe, which was carried out by Jefferys and Moser in \cite{JefferysMoser}, with an alternative presented in \cite{QuasiMotionPlanar} in which the terminology \emph{asynchronous region} is coined.  In order to build integrable approximating systems, we shall further average over $g_{2}$ to obtain the secular-integrable systems by an additional single frequency averaging.

\subsection{Asynchronous Region}
We fix the masses $m_{0}, m_{1}, m_{2}$ arbitrarily, and suppose that the eccentricities $e_{1}$ and $e_2$ are bounded away from $0, 1$, so there exist positive real numbers $e_{1}^{\vee}, e_{1}^{\wedge}, e_{2}^{\vee}, e_{2}^{\wedge}$, such that
$$0<e_{1}^{\vee} < e_{1} < e_{1}^{\wedge}<1,\quad 0<e_{2}^{\vee} < e_{2} < e_{2}^{\wedge}<1.$$
Recall that the small parameter $\alpha=\dfrac{a_{1}}{a_{2}}$ is the ratio of the semi major axes. We suppose that
$$\alpha< \alpha^{\wedge}:=\min\{\dfrac{1-e_{2}^{\wedge}}{80}, \dfrac{1-e_{2}^{\wedge}}{2 \sigma_{0}}, \dfrac{1-e_{2}^{\wedge}}{2 \sigma_{1}}\},$$ 
in which $\dfrac{1}{\sigma_0}=1+\dfrac{m_1}{m_0},  \dfrac{1}{\sigma_1}=1+\dfrac{m_0}{m_1}$ (see Appendix \ref{section:estimates} for the choice of $\alpha^{\wedge}$). In particular, 
$$\max\{\sigma_{0}, \sigma_{1}\} \alpha \dfrac{1+e_{1}}{1-e_{2}} < 1,$$
\emph{i.e.}, the two ellipses are always bounded away from each other for all the time.

Without loss of generality, we fix two real numbers $a_{1}^{\wedge}>a_{1}^{\vee}>0$, such that the relation $a_{1}^{\vee} < a_{1} < a_{1}^{\wedge}$ holds for all time.

The subset of the phase space $\Pi$ in which Delaunay coordinates for both ellipses are regular coordinates, and satisfy these restrictions is denoted by $\mathcal{P}^*$ (what could be called the \emph{asynchronous region}): it can thus be regarded (by Delaunay coordinates) as a subset\footnote{The condition defining $\mathcal{P}^*$ can be replaced by other conditions, e.g. by asking that the Deprit coordinates to be regular.} of $\T^6 \times \R^{6}$. The function $F_{pert}$ can thus be regarded as an analytic function on $\mathcal{P}^* \subset \T^6 \times \R^{6}$.

Let $\nu_1, \nu_2$ denote the two Keplerian frequencies: $\nu_i=\dfrac{\partial F_{Kep}}{\partial L_i}=\sqrt{\dfrac{M_i}{a_i^3}}, i=1,2$. 

Let $T_{\C}=\C^6/\Z^6 \times \C^6$ and $T_s$ be the $s$-neighborhood of $\T^6 \times \R^6:=\R^{6}/\Z^{6} \times \R^{6}$ in $T_{\C}$. Let $T_{\textbf{A},s}$ be the $s$-neighborhood of a set $\textbf{A} \subset \T^6 \times \R^6$ in $T_s$. 
The complex modulus of a transformation is the maximum of the complex moduli of its components. We use $| \cdot |$ to denote the modulus of either a function or a transformation. 

Lemma \ref{Appendix A: lem 3 ddd} states that there exists some small real number $s>0$, such that in $T_{\mathcal{P}^{*},s}$, $|F_{pert} | \le \hbox{Cst}\, |\alpha|^{3} $, in which the constant $\hbox{Cst}$ is independent of $\alpha$. 
\subsection{Asynchronous Elimination of the Fast Angles}\label{Subsection: Elimination of the Fast Angles} 

\begin{prop}\label{averaging} For any (fixed) $n \in \N$ , there exist an analytic Hamiltonian $F^n: \mathcal{P}^{*} \to \R$ independent of the fast angles $l_1,l_2$, and an analytic symplectomorphism $\phi^n: \mathcal{P}^{*} \supset \tilde{\mathcal{P}} \to \phi^n (\tilde{\mathcal{P}})$, $|\alpha|^{\frac{3}{2}}$-close to the identity, such that
             $$|F \circ \phi^n- F^n| \le C_{0}\, |\alpha |^{\frac{3(n+2)}{2}}$$
on $T_{\tilde{\mathcal{P}}, s^{''}}$ for some open set $\tilde{\mathcal{P}} \subset \mathcal{P}^{*}$, and some real number $s^{''}$ with $0< s^{''} < s$. Moreover, the relative measure of $\tilde{\mathcal{P}}$ in $\mathcal{P}^{*}$ tends to 1 when $\alpha$ tends to $0$. 
\end{prop}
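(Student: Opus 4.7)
The plan is to carry out an asynchronous Lie--Deprit averaging iteration, eliminating the two fast angles $l_{1}, l_{2}$ to arbitrary order in $\alpha$. The starting data is the decomposition $F = F_{Kep} + F_{pert}$ together with the analytic bound $|F_{pert}| \le \mathrm{Cst}\,|\alpha|^{3}$ on $T_{\mathcal{P}^{*},s}$ recalled just before the statement.

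At the $k$-th step I would assume to have produced a symplectomorphism $\Phi_{k}$, close to the identity, and a splitting
$$F \circ \Phi_{k} = F^{k} + R_{k},$$
where $F^{k}$ is analytic, independent of $l_{1}, l_{2}$, and $|R_{k}| \le C_{k} |\alpha|^{\frac{3(k+2)}{2}}$ on a shrunken complex domain $T_{\tilde{\mathcal{P}}_{k}, s_{k}}$. To pass from $k$ to $k+1$ I would seek an auxiliary Hamiltonian $\chi_{k}$ solving the cohomological equation
$$\{F_{Kep}, \chi_{k}\} = R_{k} - \langle R_{k}\rangle_{l_{1}, l_{2}},$$
where $\langle \cdot \rangle_{l_{1},l_{2}}$ denotes the average over the two fast angles, and take as the next change of variables the time-one flow $\exp(X_{\chi_{k}})$. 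Since $F_{Kep}$ depends only on $L_{1}, L_{2}$, Fourier expansion in $(l_{1}, l_{2})$ yields the formal solution
$$\chi_{k} = \sum_{(j_{1},j_{2}) \neq (0,0)} \frac{\widehat{R_{k}}_{j_{1}, j_{2}}}{i\,(j_{1}\nu_{1} + j_{2}\nu_{2})}\, e^{i(j_{1}l_{1} + j_{2}l_{2})}.$$

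The key analytic point is the control of the small divisors $j_{1}\nu_{1} + j_{2}\nu_{2}$, and this is precisely where the asynchronous structure enters. Because $\nu_{1}/\nu_{2} = (M_{1}/M_{2})^{1/2} \alpha^{-3/2}$ is very large, the following dichotomy holds for any $(j_{1}, j_{2}) \neq (0,0)$: either $j_{1} = 0$, in which case $|j_{1}\nu_{1} + j_{2}\nu_{2}| \ge \nu_{2}$, or $j_{1} \ne 0$, in which case, provided $|j_{2}| \le \tfrac{1}{2}|j_{1}|\nu_{1}/\nu_{2}$, one has $|j_{1}\nu_{1} + j_{2}\nu_{2}| \ge \tfrac{1}{2}|j_{1}|\nu_{1}$. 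The remaining Fourier modes, with $|j_{2}|$ violating this last inequality, have $|j_{2}|$ extremely large and are exponentially suppressed by the analyticity of $R_{k}$ in the complex strip of width $s_{k}$ around $\T^{6}$; they can be absorbed into the subsequent remainder. Combining these two regimes yields analytic control of $\chi_{k}$ and, via Cauchy estimates on a slightly narrower complex domain, of its Hamiltonian vector field $X_{\chi_{k}}$.

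Composing the successive transformations $\phi^{n} = \exp(X_{\chi_{0}}) \circ \cdots \circ \exp(X_{\chi_{n-1}})$ and writing out the Lie-series expansion of each $F^{k} \circ \exp(X_{\chi_{k}})$ yields the estimate $|F \circ \phi^{n} - F^{n}| \le C_{0}\,|\alpha|^{3(n+2)/2}$ on the common shrunken domain, with $F^{n}$ defined as $F^{k} + \langle R_{k}\rangle_{l_{1}, l_{2}}$ after the last step. The main technical obstacle, as always in such schemes, is the bookkeeping of the analyticity widths $s_{k}$ and the nested real domains $\tilde{\mathcal{P}}_{k}$: at each iterate one must decrease $s_{k}$ by a controlled amount in order to absorb the Cauchy factors appearing in every Poisson bracket, and one must verify that the composed flow $\phi^{n}$ still maps the final set $\tilde{\mathcal{P}} := \tilde{\mathcal{P}}_{n}$ into the initial asynchronous region $\mathcal{P}^{*}$, so that the averaged Hamiltonian $F^{n}$ is genuinely defined there. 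Since $\phi^{n}$ is $O(|\alpha|^{3/2})$-close to the identity, one only has to remove a boundary strip of $\mathcal{P}^{*}$ of thickness $O(|\alpha|^{3/2})$ to guarantee this inclusion, and the relative measure of $\mathcal{P}^{*} \setminus \tilde{\mathcal{P}}$ is then $O(|\alpha|^{3/2}) \to 0$ as $\alpha \to 0$, proving the last assertion of the proposition.
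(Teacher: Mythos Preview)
Your argument is essentially correct, but the route differs from the paper's in one structural respect worth noting. You solve the full two-frequency cohomological equation $\{F_{Kep},\chi_k\}=R_k-\langle R_k\rangle_{l_1,l_2}$ at each step and then invoke the asynchronous hierarchy $\nu_1/\nu_2\sim\alpha^{-3/2}$ to tame the mixed divisors $j_1\nu_1+j_2\nu_2$ via your dichotomy plus an ultraviolet cutoff on the high-$|j_2|$ modes. The paper instead exploits the asynchrony \emph{before} writing any cohomological equation: it first eliminates $l_1$ alone by solving only $\nu_1\,\partial_{l_1}\hat H=\widetilde{F}_{pert,1}$ (a genuine single-frequency equation with divisor $\nu_1$ bounded away from zero), pushes the residual coupling term $\nu_2\,\partial_{l_2}\hat H=O(\alpha^{9/2})$ into the remainder, iterates this in $l_1$ to the desired order, and only then performs a second single-frequency elimination in $l_2$ with divisor $\nu_2$. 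Thus the paper never meets a mixed divisor at all, and needs neither Fourier truncation nor the exponential-tail argument; the price is two nested sequences of transformations rather than one. Your approach is closer in spirit to a standard KAM step and would generalize more readily to situations with more than two hierarchically separated frequencies, while the paper's sequential scheme is more elementary and makes the absence of any Diophantine condition completely transparent. Both yield the same $\alpha^{3/2}$ gain per step and the same final estimate.
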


\begin{proof}
The strategy is to first eliminate $l_1$ up to sufficiently large order, and then eliminate $l_2$ to the desired order.
We describe the first step of eliminating $l_{1}$. 

To eliminate the angle $l_1$ in the perturbing function $F_{pert}$, we look for an auxiliary analytic Hamiltonian $\hat{H}$. We denote its Hamiltonian vector field by $X_{\hat{H}}$ and its flow by $\phi_t$. The symplectic coordinate transformation that we are looking for is given by the time-1 map $\phi_1(:= \phi_t|_{t=1})$ of $X_{\hat{H}}$.

Define the first order complementary part $F^1_{comp,1}$ by the equation
$$\phi_1^*F=F_{Kep}+(F_{pert}+X_{\hat{H}} \cdot F_{Kep})+F^1_{comp,1},$$
in which $X_{\hat{H}}$ is seen as a derivation operator. Let
$$\langle F_{pert}\rangle_1 = \dfrac{1}{2 \pi} \int_{0}^{2 \pi} F_{pert} \,d l_1$$
be the average of $F_{pert}$ over $l_1$, and $\widetilde{F}_{pert,1}=F_{pert}-\langle F_{pert}\rangle_1$ be its zero-average part.

As the two Keplerian frequencies do not appear at the same magnitude of $\alpha$, we do not need to ask $\hat{H}$ to solve the (standard) cohomological equation:
$$\nu_1 \partial_{l_1} \hat{H}+ \nu_2 \partial_{l_2} \hat{H}=\widetilde{F}_{pert,1};$$
instead, we just need $\hat{H}$ to solve the perturbed cohomological equation
$$\nu_1 \partial_{l_1} \hat{H}=\widetilde{F}_{pert,1}.$$
We thus set
$$\hat{H} (l_2)= \dfrac{1}{\nu_1}  \int_0^{l_1}  \widetilde{F}_{pert,1}\, d l_1$$
as long as $\nu_1 \neq 0$, which is indeed satisfied for any Keplerian frequency (of an elliptic motion). This amounts to proceed with a single frequency elimination for $l_{1}$.
We have 
 $$|\hat{H}| \le \hbox{Cst } |\alpha|^{3} \, \hbox{   in   } T_{\mathcal{P}^{*}, s} . $$
We obtain by Cauchy inequality that in $T_{\mathcal{P}^{*},s-s_0}$, $|X_{\hat{H}}| \le \hbox{Cst } |\hat{H} | \le \hbox{Cst } |\alpha|^{3}$ for some $0<s_0<s/2$. Shrinking from $T_{\mathcal{P}^{*},s- s_0}$ to $T_{\mathcal{P}^{**},s- s_0-s_1}$, where $\mathcal{P}^{**}$ is an open subset of $\mathcal{P}^{*}$, so that $\phi_1(T_{\mathcal{P}^{**},s- s_0-s_1}) \subset T_{\mathcal{P}^{*},s-s_0}$, with $s- s_0-s_1>0$. The time-1 map $\phi_1$ of  $X_H$ thus satisfies $|\phi_1-Id | \le \hbox{Cst } |\alpha |^{3}$ in $T_{\mathcal{P}^{**},s- s_0- s_1}$. The function $\phi_1^* F$ is analytic in $T_{\mathcal{P}^{**},s- s_0-s_1}$.

 Now $F$ is conjugate to
$$\phi^*_1 F =F_{Kep}+\langle F_{pert} \rangle_1 + F^1_{comp,1},$$ and $|F^1_{comp,1}|$ is of order $O(\alpha^{\frac{9}{2}})$: indeed, analogous to \cite{QuasiMotionPlanar}, the complementary part
 $$F_{comp,1}^1 = \int_0^1 (1-t) \phi_t^*(X_{\hat{H}}^2 \cdot F_{Kep})dt+ \int_0^1 \phi_t^*(X_{\hat{H}} \cdot F_{pert}) dt  - \nu_2 \dfrac{\partial \hat{H}}{\partial l_2}$$

 satisfies

 $$|F_{comp,1}^1| \le \hbox{Cst } |X_{\hat{H}}| (|\widetilde{F}_{pert,1}|+|F_{pert}|) + \nu_2 | \hat{H}| \le \hbox{Cst } | \alpha |^{\frac{9}{2}}.$$

The first order averaging with respect to $l_1$ is then accomplished.

One proceeds analogously and eliminates the dependence of the Hamiltonian of $l_1$ up to order $O(\alpha^{\frac{3(n+2)}{2}})$ for any chosen $n \in \Z_{+}$. The Hamiltonian $F$ is then analytically conjugate to

$$F_{Kep}+\langle F_{pert} \rangle_1 + \langle F^{1}_{comp,1} \rangle_{1}+ \cdots +\langle F^{1}_{comp,n-1}\rangle_{1} + F^{1}_{comp,n}, $$

in which the expression $F_{Kep}+\langle F_{pert} \rangle_1 + \langle F^{1}_{comp,1} \rangle_{1}+ \cdots + \langle F^{1}_{comp,n-1}\rangle_{1}$ is independent of $l_1$, and $F^{1}_{comp,n}$ is of order $O \left(\alpha^{\frac{3(n+2)}{2}}\right)$.

After this, we proceed by eliminating $l_2$ from 
$$F_{Kep}+\langle F_{pert} \rangle_1 + \langle F^{1}_{comp,1} \rangle_{1}+ \cdots + \langle F^{1}_{comp,n-1}\rangle_{1},$$
which is again a single frequency averaging and it can be carried out as long as $\nu_2 \neq 0$, which is always true under our hypothesis.

The Hamiltonian generating the transformation for the first step of averaging over $l_2$ is
$$\dfrac{1}{\nu_2} \int_0^{l_2} (\langle F_{pert} \rangle_1- \langle F_{pert} \rangle) d l_2 \le \hbox{Cst } |\alpha |^{\frac{3}{2}}.$$
The other steps are similar to the first step of eliminating $l_1$.
By eliminating $l_2$, the Hamiltonian $F$ is conjugate to
$$F_{Kep}+\langle F_{pert} \rangle + \langle F_{comp,1} \rangle+ \cdots +\langle F_{comp,n-1}\rangle+F_{comp,n},$$
in which the (first order) \emph{secular system}
$$F_{sec}^{1}=\langle F_{pert} \rangle := \dfrac{1}{4 \pi^{2}} \int_{0}^{2 \pi} \int_{0}^{2 \pi} F_{pert} d l_{1} d l_{2}$$
and the \emph{$n$-th order secular system}
$$F_{sec}^{n}:=\langle F_{pert} \rangle + \langle F_{comp,1} \rangle+ \cdots +\langle F_{comp,n-1}\rangle$$\label{Not: sec system}
is independent of $l_1,l_2$, with 
$$\langle F_{comp,i} \rangle =O(\alpha^{\frac{3(i+2)}{2}}),\,F_{comp,n}=O(\alpha^{\frac{3(n+2)}{2}})$$ in $T_{\tilde{\mathcal{P}}, s^{''}}$ for some open subset $\tilde{\mathcal{P}} \subset \mathcal{P}^{*}$ and some $0<s^{''}<s$ both of which are obtained by finite steps of constructions analogously to that we have described for the first step elimination of $l_{1}$. In particular, the set $\tilde{\mathcal{P}}$ is obtained by shrinking $\mathcal{P}^{*}$ from its boundary by a distance of  $O(\alpha^\frac{3}{2})$. We may thus set 
$$F^{n}:=F_{Kep}+F_{sec}^{n}.$$
\end{proof}

The function $F_{sec}^{n}$ is defined on a subset of the phase space $\Pi$ and does not depend on the fast Keplerian angles. After fixing $L_{1}$ and $L_{2}$ and reducing by the Keplerian $\T^{2}$-symmetry, the reduced function is defined on a subset of the \emph{secular space}, or the space of pairs of ellipses. By the construction presented in \cite{Albouy}, the secular space is seen to be homeomorphic to $(S^{2} \times S^{2})^{2}$.
We keep the same notation $F_{sec}^{n}$ for the resulting function.

\subsection{Secular-integrable Systems} \label{Subsection: Sec-int Sys}
Unlike the integrable planar secular systems, the spatial secular systems $F_{sec}^n$ remains to have 2 degrees of freedom after being reduced by the SO(3)-symmetry, and therefore they are \emph{a priori} not integrable. As a result, in contrast to the planar case, they cannot directly serve as an ``integrable approximating system'' for our study. 

{In $\mathcal{P}^{*}$, the function $F_{sec}^{1}$ is of order $O(\alpha^{3})$, and the functions $F_{comp, n}, n \ge 2$ are of order $O(\alpha^{\frac{9}{2}})$. }
{We express $F_{sec}^{n}$ as
$$F_{sec}^{n}(a_{1}, \alpha, e_{1}, e_{2}, g_{1}, g_{2}, h_{1}, h_{2}, i_{1}, i_{2}),$$
and expand it in powers of $\alpha$:
$$ F_{sec}^n=\sum_{i=0}^{\infty} F_{sec}^{n,i} \alpha^{i+1}=F_{sec}^{n,0} \alpha+F_{sec}^{n,1} \alpha^{2}+\cdots.$$
As a consequence of Lemma \ref{Legendre Expansion}, we see that
$$\forall n \in \mathbb{N}_{+}, F_{sec}^{n,i}=0, \qquad i=0,1.$$
Moreover, since $F_{comp}^{n}, n \ge 1$ is of order $O(\alpha^{\frac{9}{2}})$, we have $$F_{sec}^{n}-F_{sec}^{1}=O(\alpha^{\frac{9}{2}}),$$
in particular
$$F_{sec}^{n,2}=F_{sec}^{1,2}, \qquad \forall n=1,2,3,\cdots.
$$
}

As noticed by Harrington in \cite{Harrington1968}\footnote{For the (inner) restricted spatial three-body problem, the integrability of the quadrupolar system has been discovered in 1961 by Lidov  \cite{Lidov1961} (see also Lidov \cite{Lidov1962}, Kozai \cite{Kozai1962}). Its link with the non-restricted quadrupolar system has been discussed in \cite{LidovZiglin}. }, the term $F_{sec}^{1,2}$ is independent of $g_2$, thus $G_2$ is an additional first integral of the system $F_{sec}^{1,2}$. The system $F_{sec}^{1,2}$ can then be reduced to one degree of freedom after reduction of the symmetries, hence it is integrable. We call $F_{quad}:=F_{sec}^{1,2}$
 the \emph{quadrupolar system}.

The integrability of the quadrupolar Hamiltonian is, in Lidov and Ziglin's words, a ``happy coincidence'': it is due to the particular form of $F_{pert}$. Indeed, if one goes to even higher order expansions in powers of $\alpha$, then in general the truncated Hamiltonian will no longer be independent of $g_2$ (c.f. \cite{LaskarBoue}).

To have better control of the perturbation so as to apply KAM theorems, we need to build higher order integrable approximations by eliminating $g_2$ in the secular systems $F_{sec}^{n}$. This is a single frequency elimination procedure and can be carried out everywhere as long as the frequency $\nu_{quad, 2}$ of $g_2$ in $F_{quad}$ is not zero. 

Since the analytic function $F_{quad}$ depends non trivially on $G_2$ (See Section \ref{QuadrupolarDynamics}), for any $\varepsilon$ small enough, we have $|\nu_{quad, 2}| > \varepsilon$ on an  open subset $\check{\mathcal{P}}$ of $\mathcal{P}^{*}$ and locally relative measure of $\check{\mathcal{P}}$ in $\mathcal{P}^{*}$ tends to 1 when $\varepsilon$ tends to 0. For any fixed $\varepsilon$, analogous to Subsection \ref{Subsection: Elimination of the Fast Angles}, for small enough $\alpha$, there exists an open subset $\hat{\mathcal{P}}$ in $\check{\mathcal{P}}$ with local relative measure in $\check{\mathcal{P}}$ tending to 1 when $\alpha$ tends to 0, such that on $\hat{\mathcal{P}}$ we can conjugate our system up to small terms of higher orders to the normal form that one gets by the standard elimination procedure (c.f. \cite{ArnoldGeometrical}) to eliminate $g_{2}$.

 More precisely, after fixing the Laplace plane as the reference plane, as the elimination of $l_{2}$ in the proof of Proposition \ref{averaging}, for the first step of elimination, we eliminate the angle $g_{2}$ in $F_{Kep}+\alpha^{3}(F_{quad}+\alpha F_{sec}^{1,3})$ by a symplectic transformation $\psi^{3}$ close to identity,  which is the time-1 map of the Hamiltonian
$$
\dfrac{\alpha}{\nu_{g_{2}}}\left(\int_{0}^{g_{2}} \Bigl(F_{sec}^{1,3}-\dfrac{1}{2 \pi} \int_{0}^{2 \pi} F_{sec}^{1,3} d g_{2}\Bigr) d g_{2}\right).
$$

We proceed analogously for higher order eliminations. We denote by $\psi^{n'}: \hat{\mathcal{P}} \to \psi^{n'} (\hat{\mathcal{P}})$ the corresponding symplectic transformation, so that 
$$\psi^{n'*}F_{sec}^{n}=\alpha^3 F_{quad}+\alpha^4 \widetilde{ F_{sec}^{n,3} }+ \cdots + \alpha^{n'} \widetilde{ F_{sec}^{n,n'} }+ F_{secpert}^{n'+1},$$
in which $F_{secpert}^{n'+1}= O(\alpha^{n'+2})$ and $\widetilde{F_{sec}^{n,i}}, i=1, 2,\cdots$ are independent of $g_{2}$. 

Let 
$$\overline{F_{sec}^{n,n'}} = \alpha^3 F_{quad}+\alpha^4 \widetilde{F_{sec}^{n,3}}+ \cdots + \alpha^{n'} \widetilde{F_{sec}^{n,n'}};$$
we call it the $(n,n')$-th order \emph{secular-integrable system}. We have 
$$ \psi^{n'*} \phi^{n*} F=F_{Kep}+\overline{F_{sec}^{n,n'}}+F_{secpert}^{n'+1}+F_{comp}^n.$$
For $\alpha$ small enough, the last two terms can be made arbitrarily small by choosing $n,n'$ large enough. 

\section{The Quadrupolar Dynamics} \label{QuadrupolarDynamics}

The secular-integrable systems $\overline{F_{sec}^{n,n'}}$ are $O(\alpha^4)$ perturbations\footnote{Actually $\overline{F_{sec}^{n,3}}=0$ but $\overline{F_{sec}^{n,4}} \neq 0$, therefore $\overline{F_{sec}^{n,n'}}-\alpha^3 F_{quad}$ is of order $O(\alpha^{5})$.} of $\alpha^3 F_{quad}$, therefore for $\alpha$ small, the key to understand the dynamics of $\overline{F_{sec}^{n,n'}}$ is to understand the dynamics of $F_{quad}$ (seen as a function defined on a subset of the secular space). 
In this section, we shall reproduce some of the study of Lidov-Ziglin in \cite{LidovZiglin}. 

After Jacobi's elimination of the nodes, the quadrupolar Hamiltonian takes the form
\small
\begin{align*}
 F_{quad}&=-\dfrac{\mu_{quad} L_{2}^{3}}{8 a_{1} G_2^3} \left\{3\dfrac{G_1^2}{L_1^2} \Bigl[1+\dfrac{(C^2-G_1^2-G_2^2)^2}{4 G_1^2 G_2^2}\Bigr] \right.\\
 &\left.\phantom{aaaaaaaaaaaa}+ 15 \Bigl(1-\dfrac{G_1^2}{L_1^2}\Bigr)\Bigl[\cos^2{g_1}+\sin^2{g_1} \dfrac{(C^2-G_1^2-G_2^2)^2}{4 G_1^2 G_2^2}\Bigr] -6\Bigl(1-\dfrac{G_1^2}{L_1^2}\Bigr)-4\right\},
\end{align*}
\normalsize
in which $\mu_{quad}=\dfrac{m_{0} m_{1} m_{2}}{m_{0}+m_{1}}$. 

{\bf Notations}: We separate the variables of the system and the parameters by a semicolon so as to make the difference between different reduced systems more apparent: 
The functions $L_{1}$, $L_{2}$, $C$ and $G_{2}$ are first integrals of $F_{quad}(G_{1}, g_1, C, G_{2}, L_{1}, L_{2})$. If we fix these first integrals and reduce the system by the conjugate $\T^{4}$-symmetry, then $C$ and $G_{2}$ becomes parameters of the reduced system as well. The resulting system is thus written as $F_{quad}(G_{1}, g_1; C, G_{2}, L_{1}, L_{2})$.
  
By applying the triangular inequality to the vectors $\vec{C}$, $\vec{C}_{1}$, $\vec{C}_{2}$, we see that the parameters $L_{1}$, $C$ and $G_2$ must satisfy the condition 
$$|C-G_2| \leq L_1,$$ 
which defines the region of admissible parameters in the $(C, G_2)$-parameter space for fixed $L_{1}$. By triangular inequality and definition of $G_{1}$, when $C$ and $G_2$ are fixed, the quantity $|G_1|$ belongs to the interval $[G_{1,min}, G_{1,max}]$, for $G_{1,min}:=|C-G_2|, G_{1,max}:=\min \{L_1, C+G_2\}$. 

After proper blow-up of the secular space (by adding artificial pericentre/node directions to circular/coplanar ellipses), we may still use 
$(G_{1}, g_1, G_{2}, \bar{g}_{2})$ to characterize circular inner or outer ellipses or coplanar pairs of ellipses. In this section, as implicitly in Lidov-Ziglin \cite{LidovZiglin}, we retain this convention unless otherwise stated. Note that the reduction procedure of the $\hbox{SO(2)}$-symmetry around $\vec{C}$ for coplanar pairs of ellipses after the blow-up procedure, however, does not lead to an effective reduction procedure in the secular space. This will cause no problem for our study.

From its explicit expression, when $C \neq G_{2}$, we see that the Hamiltonian $F_{quad}(G_{1}, g_1; C, G_{2}, L_{1}, L_{2})$ is nevertheless regular for all $0<G_{1}< L_{1}$. This phenomenon comes from the expression of $\cos{(i_{1}-i_{2})}$ as a function of $C, G_{1}, G_{2}$, and thus also holds for any $\overline{F}_{sec}^{n,n'}$.\footnote{Each $\overline{F_{sec}^{n,n'}}$ depends polynomially on $\cos(i_{1}-i_{2})$ (through Legendre polynomials), therefore it remains analytic in $G_{1}$ for $0<G_{1} <L_{1}$ if we substitute $\cos(i_{1}-i_{2})$ by $\frac{C^{2}-G_{1}^{2}-G_{2}^{2}}{2 G_{1} G_{2}}$.} For $G_{1} < G_{1, min}$, the dynamics determined by the above expression of $F_{quad}$ is irrelevant to the real dynamics, but the fact that the expression of $F_{quad}$ is analytic in $G_{1}$ for all $0<G_{1} <L_{1}$ enable us to develop $F_{quad}$ into Taylor series of $G_{1}$ at $\{G_{1}=G_{1,min}\}$ for $G_{1,min}>0$. In Appendix \ref{non-degeneracySecularFrequencyMap}, this allows us to show the existence of torsion for those quadrupolar invariant tori near $\{G_{1}=G_{1,min}\}$ with some simple calculations.

Now we may fix $C$ and $G_{2}$ and reduce the system to one degree of freedom. When $C \neq G_{2}$, the (physically relevant) reduced quadrupolar dynamics lies in the cylinder defined by the condition $$G_{1,min} \le G_{1} \le G_{1,max}.$$ 
As is shown by Lidov-Ziglin, for fixed $L_{1}$ and $L_{2}$, in different regions of the $(C, G_{2})$-parameter space, the phase portraits in the $(G_1, g_1)$- plane have periodic orbits, finitely many singularities
and separatrices; the first two kinds give rise to invariant 2-tori and periodic orbits of the reduced system of $F_{quad}(G_{1}, g_1, G_{2}; C, L_{1}, L_{2})$ by the $\hbox{SO(3)}$-symmetry. 

The quadrupolar phase portraits in the $(G_{1}, g_1)$-space are invariant under the translations 
$$(g_1, G_{1}) \to (g_1 + n \, \pi, G_{1}), n \in \Z,$$
and the reflections
$$(g_1, G_{1}) \to (\pi-g_1 , G_{1}).$$
Therefore, without loss of generality, we can identify points obtained by reflexions and translations. In particularly, we shall make this identification for the singularities. 

When $C \neq G_2$, the dynamics of $F_{quad}$ can be easily deduced from \cite{LidovZiglin} by using the relations ($\bbespilon,\bbomega$\label{Not: quadru variables LZ} denote respectively the symbols $\varepsilon,\omega$ in \cite{LidovZiglin}) 
$$\bbespilon=\dfrac{G_1^2}{L_1^2},\quad \bbomega=g_2.$$
According to different choices of parameters,  we list different quadrupolar phase portraits  in the following:

\begin{enumerate}
\item $G_{2} < C, 3 G_{2}^{2} + C^{2} < L_{1}^{2}.$\\
In this case, there exists an elliptical singularity 
$$B: (g_1\equiv\dfrac{\pi}{2} \, (mod \, \pi), G_{1}=G_{1,B}),$$
where $G_{1,B}$ is determined by the equation
$$\dfrac{G_{1,B}^{6}}{L_{1}^{6}} - \left(\dfrac{G_{2}^{2} + 2 C^{2}}{2 L_{1}^{2}}+\dfrac{5}{8}\right) \dfrac{G_{1,B}^{4}}{L_{1}^{4}} + \dfrac{5 (C^{2}-G_{2}^{2})^{2}}{8 L_{1}^{4}} =0.$$
    
There also exists a hyperbolic singularity 
$$A: \left (g_1 \equiv 0  \, (mod \, \pi ), G_{1}=\sqrt{3 G_{2}^{2} + C^{2}} \right).$$

\item $G_{2} + C < L_{1}, 0 < (G_{2}-C)(G_{2}+C)^{2} < 5 C (L_{1}^{2}-(C+G_{2})^{2})$ \\or \\     $G_{2} + C > L_{1}, 0 < 2 L_{1}^{2} (3 G_{2}^{2} + C^{2} - L_{1}^{2}) < 5  (4 L_{1}^{2} G_{2}^{2}-(C^{2}-G_{2}^{2}-L_{1}^{2})^{2})$.
     
         In this case, there exist two singularities: the elliptic singularity $B$, and a hyperbolic singularity $E$:
         $$E: (g_1 \equiv \arcsin \sqrt{\dfrac{(G_{2}-C)(G_{2}+C)^{2}}{5 C (L_{1}^{2}-(G_{2}+C)^{2})}}  (mod \, \pi ), G_{1}=G_{1,max})$$
         if $C+G_{2}<L_{1}$, and
          $$E: (g_1\equiv \arcsin \sqrt{\dfrac{2 L_{1}^{2} (3 G_{2}^{2} + C^{2} - L_{1}^{2})}{5  (4 L_{1}^{2} G_{2}^{2}-(C^{2}-G_{2}^{2}-L_{1}^{2})^{2})}}  (mod \, \pi ), G_{1}=G_{1,max})$$
          if $C+G_{2} > L_{1}$.

\item $(C-G_{2})^{2}<\dfrac{2}{3} (\dfrac{G_{2}^{2}}{2} + C^{2} + \dfrac{5 L_{1}^{2}}{8}) < \min \{L_{1}^{2}, (C+G_{2})^{2}\}$\label{Condition (3)} \\
          $L_{1}^{2} (C^{2}+G_{2}^{2})^{2} < \dfrac{32}{135} (\dfrac{G_{2}^{2}}{2} + C^{2} + \dfrac{5 L_{1}^{2}}{8})^{3}$ \\
          $5 C (L_{1}^{2}-(C+G_{2})^{2}) < (G_{2}-C)(G_{2}+C)^{2}$, if $C+G_{2} < 1$ and\\
          $5  (4 L_{1}^{2} G_{2}^{2}-(C^{2}-G_{2}^{2}-L_{1}^{2})^{2}) < 2 L_{1}^{2} (3 G_{2}^{2} + C^{2} - L_{1}^{2})$, if $C+G_{2} > 1$. \\
           In this case, there exists an elliptic singularity $B$ and a hyperbolic singularity $A'$ on the line defined by $g_1 \equiv \dfrac{\pi}{2} \,(mod \, \pi)$. The ordinate of $A'$ is determined by the same equation that defines the ordinate of $B$ in the case (1).
           
\item The border cases of the above-listed choices of parameters.
         For such parameters, the corresponding phase portraits can be easily deduced by some limiting procedures. We shall not need them in this study. 
                                
\item There are no singularities for other choice of parameters.

\end{enumerate}

In the case $C=G_{2}$, the invariant curves in the corresponding one degree of freedom system cannot avoid passing degenerate inner ellipses, for which the angle $l_{1}$ and thus the averaging procedure are not well-defined. Moreover, the corresponding inner Keplerian dynamics cannot avoid double collisions. The persistence of the corresponding invariant tori in $F$ necessarily requires the regularization of the inner double collisions and study the corresponding secular dynamics. We avoid analyzing this case in this article, and refer to \cite{AlmostCollision} for the precise treatment of the analysis of the quadrupolar system near $C=G_{2}$ and the related perturbative study.

Figures \ref{LZ Parameter Figure} and \ref{Fig: quadrupolar Phase LZ} are the parameter space and phase portraits of the quadrupolar system respectively, which are essentially those of \cite{LidovZiglin}.

\begin{figure} 
\centering
\includegraphics[width=3.5in]{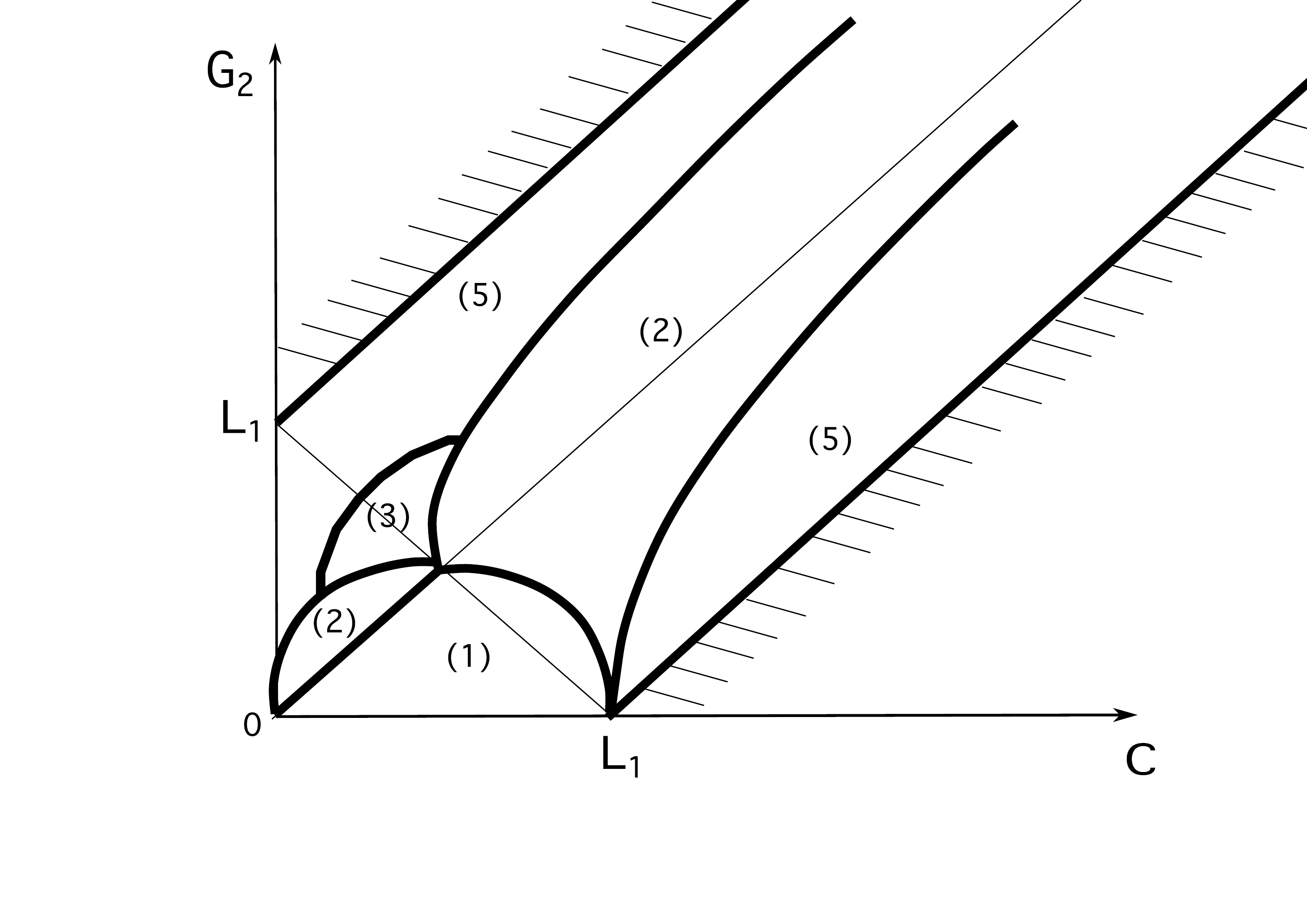} 
\caption{The parameter space of the quadrupolar system} \label{LZ Parameter Figure}
\end{figure}

\begin{figure}
\centering
\includegraphics[width=4.5in]{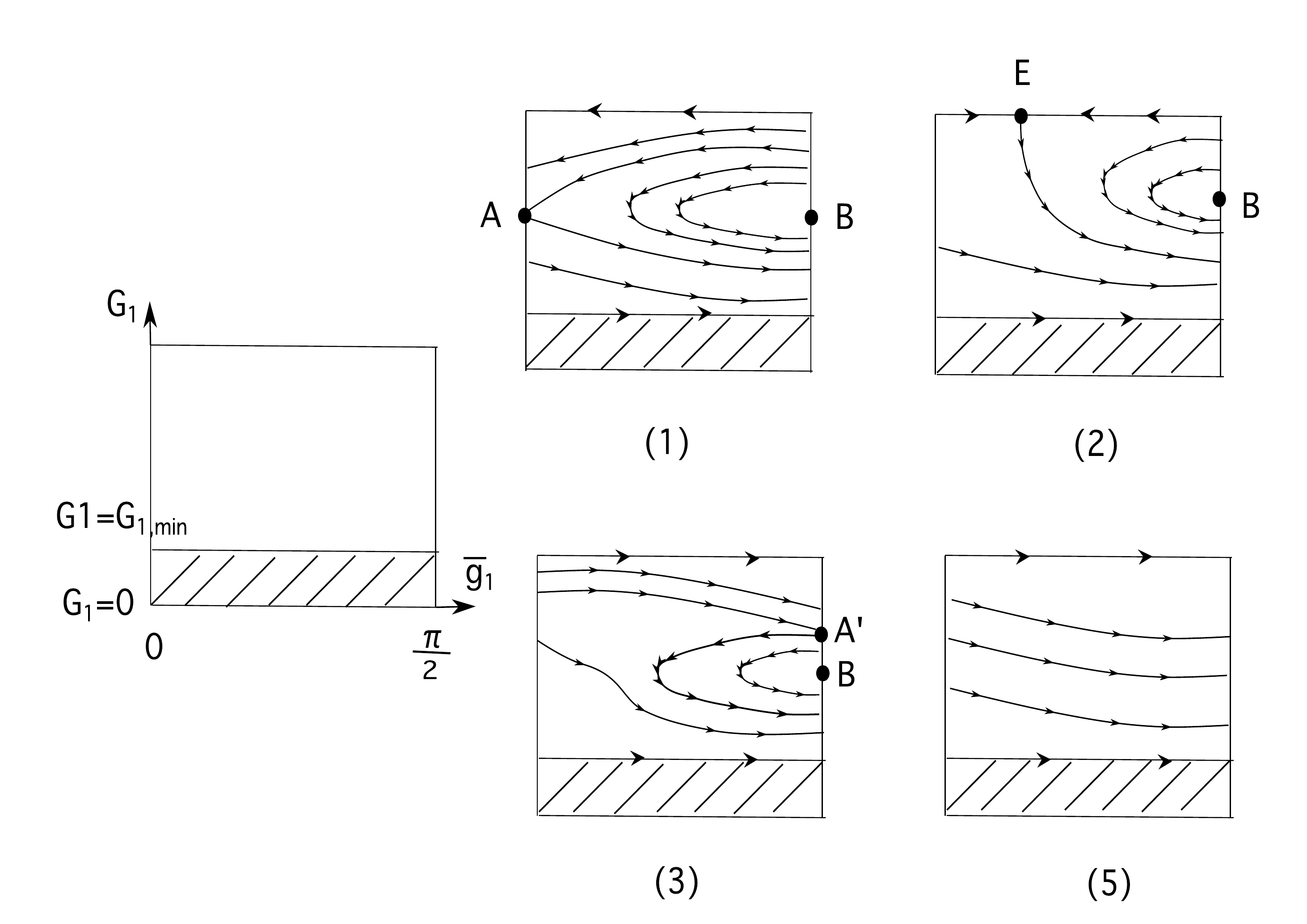} 
\caption{The phase portraits of the quadrupolar system for $C \neq C_{2}$.} \label{Fig: quadrupolar Phase LZ}
\end{figure}

By construction, for each positive integer pair $(n, n')$, the higher order secular-integrable systems $\overline{F_{sec}^{n,n'}}$ also has first integrals $C$ and $G_{2}$. As for $F_{quad}$, when $C \neq G_{2}$, the inner eccentricity $e_{1}$ is bounded away from $1$.  After fixing $C$ and $G_{2}$ and reducing $\overline{F_{sec}^{n,n'}}$ by the SO(2)-actions of their conjugate angles, the reduced dynamics of $\overline{F_{sec}^{n,n'}}$ is defined in the same space as that of $F_{quad}$ for $C \neq G_{2}$. By analyticity of $F_{quad}$, we will show that for a dense open set of the parameter space, the singularities $A, B, A', E$ are of Morse type (Proposition \ref{Quadrupolar Singularity non-degeneracy}). Being Morse singularities, they persist under small perturbations and thus serve as singularities for $\overline{F_{sec}^{n,n'}}$ for small enough $\alpha$. The phase portraits of $\overline{F_{sec}^{n, n'}}$ are just small perturbations of (and orbitally conjugate to) that of the quadrupolar system $F_{quad}$. 

\section{The KAM Theorem and Applications}
In this section, we first give an analytic version of a powerful ``hypothetical
conjugacy'' theorem (c.f. \cite{FejozStability}), which does not depend on any non-degeneracy condition. We then
discuss the classical (strong) iso-chronous non-degeneracy condition which
guarantees the existence of KAM tori. Finally, a theorem of J.P{\"o}schel is presented, which shows the existence of families of periodic solutions accumulating KAM tori. 

\subsection{Hypothetical Conjugacy Theorem}

For $p \ge 1$ and $q \ge 0$, consider the phase space $\R^{p} \times
\T^p \times \R^q \times \R^q = \{(I,\theta,x,y)\}$ endowed with the
standard symplectic form $d I \wedge d \theta + d x \wedge d y$. All
mappings are assumed to be analytic except when explicitly mentioned
otherwise.

Let $\delta > 0$, $q'\in\{0,...,q\}$, $q''=q-q'$, $\varpi \in \R^p$,
and $\beta \in \R^q$. Let $B_{\delta}^{p+2 q}$ be the $(p+2 q)$-dimensional closed ball with radius $\delta$ centered at the origin in $\R^{p+2q}$, and $N_{\varpi,\beta}= N_{\varpi,\beta}
(\delta,q')$ be the space of Hamiltonians $N \in C^{\omega}(\T^p
\times B_{\delta}^{p+2q},\R)$ of the form
$$N= c + \langle \varpi,I \rangle + \sum_{j=1}^{q'}
\beta_j (x_j^2+ y_j^2) + \sum_{j=q'+1}^{q} \beta_j (x_j^2 -
y_j^2)+\langle A_1(\theta), I \otimes I \rangle + \langle A_2(\theta),
I \otimes Z \rangle + O_3(I,Z),
$$ 
with $c\in \R$, $A_1 \in C^{\omega}(\T^p, \R^{p} \otimes \R^p), A_2
\in C^{\omega}(\T^p,\R^p \otimes \R^{2q})$ and $Z = (x,y)$. The
isotropic torus $\T^p \times \{0\} \times \{0\}$ is an invariant
$\varpi$-quasi-periodic torus of $N$, and its normal dynamics is
elliptic, hyperbolic, or a mixture of both types, with Floquet
exponents $\beta$. The definitions of tensor operations can be found in e.g. \cite[p.62]{FejozStability}.

Let $\bar{\gamma} > 0$ and $\bar{\tau} > p-1$, $|\cdot|$ be the $\ell^2$-norm on
$\Z^{p}$. Let
$HD_{\bar\gamma,\bar\tau}=HD_{\bar{\gamma},\bar{\tau}}(p,q',q'')$ be the set of vectors $(\varpi,\beta)$ satisfying the
following homogeneous Diophantine conditions:
$$|k \cdot \varpi + l' \cdot \beta'| \geq \bar\gamma
(|k|^{\bar\tau}+1)^{-1}$$ for all $k\in \Z^p \setminus \{0\}$ and $l'
\in \Z^{q'}$ with $|l'_1| + \cdots + |l'_{q'}| \leq 2$. We have
denoted $(\beta_1,...,\beta_{q'})$ by $\beta'$. {Let $\|\cdot\|_{s}$ be the $s$-analytic norm of an analytic function, \emph{i.e.}, the supremum norm of its analytic extension to the $s$-neighborhood of its (real) domain in the complexified space $\C^{p} \times \C^{p}/\Z^{p}$.}

\begin{theo} \label{KAM} Let $(\varpi^o,\beta^o) \in
  HD_{\bar\gamma,\bar\tau}$ and $N^o \in N_{\varpi^o,\beta^o}$. For
 some $d>0$ small enough,  there exists $\varepsilon>0$ such that for every
  Hamiltonian $N' \in C^\omega(\T^p \times B_\delta^{p+2q})$ such that
  $$\|N'-N^o\|_d \leq \varepsilon,$$
 there exists a vector $(\varpi,\beta)$ satisfying the following
  properties:
  \begin{itemize}
  \item the map $N' \mapsto (\varpi,\beta)$ is of class $C^\infty$ and
    is $\varepsilon$-close to $(\varpi^o,\beta^o)$ in the
    $C^\infty$-topology;
  \item if $(\varpi,\beta) \in HD_{\bar\gamma,\bar\tau}$, $N'$ is 
    symplectically analytically conjugate to a Hamiltonian $N \in
    N_{\varpi,\beta}$. 
  \end{itemize}
 Moreover, $\varepsilon$ can be chosen of the form $\hbox{Cst} \, \bar\gamma^{k}$
  (for some $\hbox{Cst}>0$, $k\geq 1$) when $\bar\gamma$ is small.
\end{theo}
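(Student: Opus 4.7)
The plan is to implement a Newton-type KAM iteration with a counter-term (the ``translated torus'' technique of Moser--R\"ussmann--Herman--F\'ejoz), so that a symplectic conjugacy is constructed together with a frequency/Floquet adjustment $(\varpi,\beta)$ that depends Whitney-smoothly on the perturbation $N'$. I would first introduce the space $N_{\varpi,\beta}$ with $(\varpi,\beta)$ regarded as external parameters and write $N' = N^o + P$ with $P$ small in the $d$-norm. At each step I look for a symplectic transformation $\Phi$ generated by a small Hamiltonian $S$, and for small corrections $(\delta\varpi,\delta\beta,\delta c)$, so that
\[
(N^o + P + \delta c + \langle \delta\varpi, I\rangle + \tfrac12\sum_j \delta\beta_j\,\epsilon_j(x_j^2\pm y_j^2))\circ\Phi \;\in\; N_{\varpi + \delta\varpi,\,\beta+\delta\beta}+\mathrm{(quadratic~remainder)}.
\]
Linearizing in $S$ yields the standard cohomological equation $\{N^o,S\} + P - [P] + (\delta c + \langle\delta\varpi,I\rangle + \ldots) = 0$, where $[P]$ denotes the resonant part that must be absorbed in the parameter correction.

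The key step is solving this cohomological equation on a slightly shrunk complex neighbourhood $T_{d'}$ with $d'<d$. I would expand every term in Fourier series in $\theta\in\T^p$ and in Taylor series in $(I,Z)$ up to order two, split it according to its homogeneity in $(I,Z)$, and invert on each Fourier--Taylor block the small divisors $k\cdot\varpi^o + l'\cdot\beta'^o$ using the Diophantine hypothesis $(\varpi^o,\beta^o)\in HD_{\bar\gamma,\bar\tau}$. Because the condition only asks $|l'_1|+\cdots+|l'_{q'}|\le 2$, only the parts involving at most quadratic monomials in the elliptic normal variables produce small divisors; the purely hyperbolic normal block is solved by a bounded algebraic inversion (the matrix involved has no purely imaginary spectrum). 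The non-resonant frequency-like terms are precisely $\delta c + \langle\delta\varpi, I\rangle + \sum \delta\beta_j\epsilon_j (x_j^2\pm y_j^2)$, and these counter-terms absorb all obstructions, leaving $S$ uniquely determined up to its mean. Cauchy estimates on the shrinking neighbourhood give $\|S\|_{d'} \le C\bar\gamma^{-1}(d-d')^{-\bar\tau-p}\|P\|_d$, hence $\|\Phi-\mathrm{id}\|_{d'}$ and the new error $\|P_+\|_{d'} \le C\bar\gamma^{-2}(d-d')^{-2(\bar\tau+p)}\|P\|_d^2$, the standard quadratic estimate.

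The iteration is then run with a geometric sequence of widths $d_n\to d_\infty>0$ chosen so that the quadratic convergence of $\|P_n\|$ dominates the polynomial small-divisor loss, provided the initial size $\varepsilon = \|N'-N^o\|_d$ is smaller than $\mathrm{Cst}\,\bar\gamma^k$. At each step the frequency is updated $(\varpi_n,\beta_n) = (\varpi_{n-1},\beta_{n-1})+(\delta\varpi_n,\delta\beta_n)$, and the total increments are geometrically summable. Since $(\varpi_n,\beta_n)$ is produced without ever using the Diophantine condition at step $n$ (the counter-term definition only requires inversion of divisors at the \emph{initial} $(\varpi^o,\beta^o)$ within each block, together with an implicit-function argument identifying the new block centre), the limit $(\varpi,\beta)$ is well defined and $\varepsilon$-close to $(\varpi^o,\beta^o)$ regardless of its arithmetic properties. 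If $(\varpi,\beta)\in HD_{\bar\gamma,\bar\tau}$, then all the intermediate composed transformations stay uniformly bounded on $T_{d_\infty}$, so the symplectic conjugacy $\Phi_\infty = \lim \Phi_1\circ\cdots\circ\Phi_n$ converges to an analytic symplectomorphism conjugating $N'$ to an element of $N_{\varpi,\beta}$; if not, the iteration is simply not iterated but the parameter $(\varpi,\beta)$ has already been defined.

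The $C^\infty$ (Whitney) dependence of $N'\mapsto(\varpi,\beta)$ is obtained by differentiating the iteration with respect to $N'$: each step is a composition of explicit operations (Fourier truncation, division by small divisors, Hamiltonian flow) that are smooth in the parameter, and the quadratic convergence survives differentiation at the price of slightly worse constants. A Whitney extension argument then produces a genuine $C^\infty$-map on a neighbourhood of $N^o$, refined to $\varepsilon$-closeness in the $C^\infty$-topology by the usual interpolation. The assertion that $\varepsilon$ can be chosen of the form $\mathrm{Cst}\,\bar\gamma^k$ follows directly from the small-divisor loss accumulated through the iteration. The main technical obstacle will be the careful bookkeeping of the mixed elliptic--hyperbolic normal form: one must guarantee that the non-resonant block of the cohomological operator, restricted to monomials of degree $\le 2$ in the normal directions, is invertible with norms controlled by $\bar\gamma$, and that its inverse behaves well under the parametric Whitney differentiation — a point best handled by separating the elliptic block (which uses the second Melnikov hypothesis) from the hyperbolic block (which uses hyperbolicity of the Floquet operator).
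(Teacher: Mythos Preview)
The paper does not give its own proof of this theorem: immediately after the statement it writes that ``its complete proof will appear in the article \cite{FejozMoser} of J.~F\'ejoz'' and that, except for analyticity of the conjugation, the conclusions already follow from the $C^\infty$ hypothetical conjugacy theorem of \cite{FejozStability}. So there is nothing in the paper to compare your argument against; what you have sketched is, in outline, precisely the Herman--R\"ussmann--F\'ejoz counter-term scheme that those references implement.

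Your outline has the right ingredients (Newton iteration, cohomological equation solved blockwise under the Diophantine and second Melnikov conditions, counter-terms absorbing the resonant obstructions, quadratic convergence with loss $\bar\gamma^{-1}(d-d')^{-\bar\tau-p}$, Whitney smoothness by differentiating the scheme). One point is stated backwards, however. You write that ``if $(\varpi,\beta)\in HD_{\bar\gamma,\bar\tau}$, then all the intermediate composed transformations stay uniformly bounded \dots; if not, the iteration is simply not iterated.'' In the Herman scheme the small divisors are always those of the \emph{fixed} Diophantine vector $(\varpi^o,\beta^o)$, so the iteration \emph{always} converges and produces an analytic symplectomorphism $\Phi_\infty$ together with a counter-term $\Lambda$ such that $(N'+\Lambda)\circ\Phi_\infty\in N_{\varpi^o,\beta^o}$. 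The map $N'\mapsto(\varpi,\beta)$ is read off from $\Lambda$ and is therefore defined and smooth for every $N'$ near $N^o$, regardless of arithmetic. The conditional clause ``if $(\varpi,\beta)\in HD_{\bar\gamma,\bar\tau}$'' enters only because one wants $N'$ itself, not $N'+\Lambda$, in normal form: one then re-bases the scheme at $(\varpi,\beta)$ (or, equivalently, uses an implicit-function argument over the family of base frequencies) and needs $(\varpi,\beta)$ Diophantine to do so. Your sketch conflates these two roles of the Diophantine condition; once disentangled, the argument is the one in the cited references.
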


This theorem is an analytic version of the $C^{\infty}$ ``hypothetical conjugacy
theorem'' of~\cite{FejozStability}. Its complete proof will appear in the article \cite{FejozMoser} of J. Féjoz. Actually, since analytic functions are $C^{\infty}$, except for the analyticity of the conjugation, other statements of the theorem directly follow from the ``hypothetical conjugacy theorem'' of~\cite{FejozStability}.

\subsection{An Iso-chronic KAM Theorem}

We now assume that the Hamiltonians $N^o=N^o_\iota$ and $N' = N'_\iota$
depend analytically (actually $C^1$-smoothly would suffice) on some parameter
$\iota \in B_1^{p+q}$. Recall that, for each $\iota$, $N^o_\iota$ is
of the form
\small
$$N^o_\iota= c^o_\iota + \langle \varpi^o_\iota,I \rangle + \sum_{j=1}^{q'}
\beta^o_{\iota,j} (x_j^2+ y_j^2) + \sum_{j=q'+1}^{q} \beta^o_{\iota,j}
(x_j^2 - y_j^2)+\langle A_{\iota,1}(\theta), I \otimes I \rangle +
\langle A_{\iota,2} (\theta), I \otimes Z \rangle + O_3(I,Z).$$\normalsize
Theorem~\ref{KAM} can be applied to $N^o_\iota$ and $N'_\iota$ for each
$\iota$.  We will now add some classical non-degeneracy condition to the hypotheses of the theorem, which ensure that the condition
``$(\varpi_\iota,\beta_\iota) \in HD_{\bar\gamma,\bar\tau}$'' actually
occurs often in the set of parameters.

Call 
$$HD^o = \left\{(\varpi^o_\iota,\beta^o_\iota) \in
  HD_{\bar\gamma,\bar\tau}: \, \iota \in 
  B_{1/2}^{p+q}\right\}$$ the set of ``accessible''
$(\bar\gamma,\bar\tau)$-Diophantine unperturbed frequencies. The
parameter is restricted to a smaller ball in order to avoid boundary
problems.

\begin{cor}[Iso-chronic KAM theorem] \label{cor: isochron nondeg} Assume the
  map
  $$B_1^{p+q} \rightarrow \R^{p+q}, \quad \iota \mapsto
  (\varpi_\iota^o,\beta_\iota^o)$$ is a diffeomorphism onto its image.
  If $\varepsilon$ is small enough and if $\|N'_\iota-N^o_\iota\|_{d} <
  \varepsilon$ for each $\iota$, the following holds:

  For every $(\varpi,\beta) \in HD^o$ there exists a unique $\iota \in
  B^{p+q}_1$ such that $N'_\iota$ is symplectically conjugate to some
  $N \in N_{\varpi,\beta}$. Moreover, there exists $\bar{\gamma}>0, \bar{\tau}>p-1$, such that the set
  $$\{\iota \in B_{1/2}^{p+q}: \, (\varpi_\iota,\beta_\iota) \in
  HD^o\}$$ has positive Lebesgue measure.
\end{cor}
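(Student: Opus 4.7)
The plan is to combine Theorem \ref{KAM} applied pointwise in $\iota$ with the diffeomorphism hypothesis on the unperturbed frequency map and a standard measure estimate on Diophantine frequencies. Denote the unperturbed frequency map by $F^o(\iota)=(\varpi_\iota^o,\beta_\iota^o)$ and the perturbed frequency map furnished by Theorem~\ref{KAM} by $F(\iota)=(\varpi_\iota,\beta_\iota)$. By that theorem, $F$ depends smoothly on $N'_\iota$, hence smoothly on $\iota$, and $\|F-F^o\|_{C^\infty}=O(\varepsilon)$ uniformly on $B_1^{p+q}$.

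First I would establish that the perturbed map is still a diffeomorphism onto its image. By hypothesis $F^o$ is a diffeomorphism of $B_1^{p+q}$ onto its image, so $D F^o$ is invertible with uniformly bounded inverse on the compact set $\overline{B_{3/4}^{p+q}}$. Since $F$ is $C^1$-close to $F^o$ (by the $C^\infty$ estimate and Cauchy), $DF$ is also invertible there and $F$ is injective on $\overline{B_{3/4}^{p+q}}$ by standard Lipschitz/inverse function arguments; hence $F$ is a diffeomorphism of a neighborhood of $\overline{B_{1/2}^{p+q}}$ onto its image for $\varepsilon$ small enough. Uniqueness of $\iota$ in the first assertion of the corollary follows at once.

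For existence, fix $(\varpi,\beta)\in HD^o$. By definition $(\varpi,\beta)=F^o(\iota^o)$ for some $\iota^o \in B_{1/2}^{p+q}$, and because $F$ is $\varepsilon$-close to $F^o$ and has a uniformly bounded inverse near $\iota^o$, the equation $F(\iota)=(\varpi,\beta)$ has a unique solution $\iota$ in a neighborhood of $\iota^o$ contained in $B_1^{p+q}$. At this $\iota$ one has $(\varpi_\iota,\beta_\iota)=(\varpi,\beta)\in HD_{\bar\gamma,\bar\tau}$, so Theorem~\ref{KAM} applied to $N'_\iota$ yields an analytic symplectic conjugacy to some $N\in N_{\varpi,\beta}$.

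For the measure statement, I would use the classical fact that the complement of $HD_{\bar\gamma,\bar\tau}$ in any fixed bounded set of $\R^{p+q}$ has Lebesgue measure $O(\bar\gamma)$ when $\bar\tau>p-1$ is fixed (this is the standard Borel--Cantelli style estimate for the small divisor conditions defining $HD_{\bar\gamma,\bar\tau}$). Applied to the bounded open set $F^o(B_{1/2}^{p+q})$, this shows that for $\bar\gamma$ small enough
\[
\mathrm{Leb}\bigl(HD^o\bigr)=\mathrm{Leb}\bigl(HD_{\bar\gamma,\bar\tau}\cap F^o(B_{1/2}^{p+q})\bigr)>0.
\]
Pulling back by $F$, whose Jacobian is uniformly bounded above and below on $B_{1/2}^{p+q}$, gives $\mathrm{Leb}\bigl(F^{-1}(HD^o)\bigr)>0$, and $C^0$-closeness of $F$ to $F^o$ ensures this preimage is essentially contained in $B_{1/2}^{p+q}$ (up to a set of arbitrarily small measure), hence its intersection with $B_{1/2}^{p+q}$ still has positive measure. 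I expect the only delicate point to be this last inclusion bookkeeping: one must verify that one can choose $\bar\gamma$ small (hence $\varepsilon$ accordingly small through the last statement of Theorem~\ref{KAM}) while keeping the perturbative estimates on $F$ valid, but this is precisely the order of quantifiers built into the hypothesis ``$\|N'_\iota-N^o_\iota\|_d<\varepsilon$ for each $\iota$''.
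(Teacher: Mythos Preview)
Your argument is correct and follows essentially the same route as the paper's proof: show that the perturbed frequency map $\iota\mapsto(\varpi_\iota,\beta_\iota)$ is $C^1$-close to the unperturbed one and hence a diffeomorphism on a slightly larger ball, invert it over $HD^o$ to get existence and uniqueness of $\iota$, apply Theorem~\ref{KAM} at that $\iota$, and then pull back the positive-measure Diophantine set by the smooth inverse. The paper is terser---it uses the intermediate ball $B_{2/3}^{p+q}$ to absorb in one stroke the ``inclusion bookkeeping'' you spell out at the end, and it simply invokes that smooth maps send positive-measure sets to positive-measure sets rather than citing the explicit $O(\bar\gamma)$ estimate on the complement of $HD_{\bar\gamma,\bar\tau}$---but the logical skeleton is the same.
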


\begin{proof}
  If $\varepsilon$ is small, the map $\iota \mapsto
  (\varpi_\iota,\beta_\iota)$ is $C^1$-close to the map $\iota \mapsto
  (\varpi^o_\iota,\beta^o_\iota)$ and is thus a diffeomorphism over
  $B_{2/3}^{p+q}$ onto its image, which contains the positive measure set $HD^o$ for some $\bar{\gamma}>0, \bar{\tau} \ge p-1$. The first
  assertion then follows from Theorem~\ref{KAM}. Since the inverse
  map $(\varpi,\beta) \mapsto \iota$ is smooth, it sends sets of positive measure onto sets of
  positive measure. 
\end{proof}

\begin{example-condition} \label{isochron nondeg}
When  $N^o=N^{o}(I)$ is integrable, $q=0$, we may set $N^o_\iota (I) := N^o(\iota + I)$. The iso-chronic non-degeneracy of $N^{o}_{\iota}$ is just the non-degeneracy of the Hessian $\mathscr{H}(N^{o})(I)$ of $N^{o}$  with respect to $I$: 
$$|\mathscr{H}(N^{o})(I)| \neq 0.$$
 When this is satisfied, Corollary \ref{isochron nondeg} asserts the persistence of a set of Lagrangian invariant tori of $N^o=N^{o}(I)$ parametrized by a positive measure set in the action space. By Fubini theorem, these invariant tori form a set of positive measure in the phase space. 

If the system $N^{o}(I)$ is properly degenerate, say 
$$I=(I^{(1)}, I^{(2)},\cdots, I^{(N)}),$$
and there exist real numbers
$$0<d_{1}<d_{2}<\cdots< d_{N}$$
such that
 $$N^{o}(I)=N^{o}_{1}(I^{(1)})+\epsilon^{d_{1}} N^{o}_{2}(I^{(1)}, I^{(2)}) + \cdots +\epsilon^{d_{N}} N^{o}_{N}(I),$$
 then, 
 $$|\mathscr{H}(N^{o})(I)| \neq 0, \,\, \forall\, 0<\epsilon<<1\,\Leftarrow|\mathscr{H}(N^{o}_{i})(I^{(i)})| \neq 0, \forall i=1,2,\cdots, N.$$
\emph{i.e.} the non-degeneracy of $N^{o}(I)$ can be verified separately at each scale.

Let us explain this fact by a simple example: Let $N^{o}(I_{1}, I_{2})=N^{o}_{1}(I_{1})+ \epsilon N^{o}_{2}(I_{1}, I_{2})$, then $$|\mathscr{H}(N^{o})(I_{1}, I_{2})|=\epsilon \cdot \frac{d^{2} N^{o}_{1}(I_{1})}{d I_{1}^{2}} \cdot \frac{d^{2} N^{o}_{2}(I_{1}, I_{2})}{d I_{2}^{2}} + O (\epsilon^{2}).$$ Therefore for small enough $\epsilon$, to have $|\mathscr{H}(N^{o})(I_{1}, I_{2})| \neq 0$, it suffices to have $\dfrac{d^{2} N^{o}_{1}(I_{1})}{d I_{1}^{2}} \neq 0$ and $\dfrac{d^{2} N^{o}_{2}(I_{1}, I_{2})}{d I_{2}^{2}} \neq 0$.

The smallest frequency of $N^{o}(I)$ is of order $\epsilon^{d_{N}}$. 
if $N^{o}(I)$ is non-degenerate, then for any $0<\epsilon<<1$, there exists a set of positive measure in the action space, such that under the frequency map, its image contains a set of positive measure of homogeneous Diophantine vectors in $HD_{\epsilon^{d_{N}} \bar{\gamma},\bar{\tau}}$ whose measure is uniformly bounded from below for $0<\epsilon<<1$. Actually, since for any vector $\nu' \in \R^{p+q}$,
$$\epsilon^{d_{N}} \nu' \in HD_{\epsilon^{d_{N}} \bar{\gamma},\bar{\tau}} \Leftrightarrow \nu' \in HD_{\bar{\gamma},\bar{\tau}}, $$
the measure of Diophantine frequencies of $N^{o}(I)$ in $HD_{\epsilon^{d_{N}} \bar{\gamma},\bar{\tau}}$ is lower bounded by the measure of Diophantine frequencies of 
$$N^{o}_{1}(I^{(1)})+N^{o}_{2}(I^{(1)}, I^{(2)}) + \cdots +N^{o}_{N}(I)$$
in $HD_{\bar{\gamma},\bar{\tau}}$, which is independent of $\epsilon$.

Following Theorem \ref{KAM}, we may thus set $\varepsilon=\hbox{Cst}\, (\epsilon^{d_{N}}\, \bar{\gamma})^{k}$ for the size of allowed perturbations, for some positive constant $\hbox{Cst}$ and some $k \ge 1$, provided $\bar{\gamma}$ is small.
\end{example-condition}

\subsection{Periodic Solutions Accumulating KAM Tori}

A theorem of J. P{\"o}schel (the last statement of \cite[Theorem 2.1]{Poeschel}; see also \cite{ChierchiaPeriodic}) permits us to show that there are families of periodic solutions accumulating the KAM Lagrangian tori. In our settings, this theorem can be stated in the following way:

\begin{theo} \label{Poeschel} Under the hypothesis of Corollary \ref{cor: isochron nondeg}, the Lagrangian KAM tori of the system $N'_{\iota}$ lie in the closure of the set of its periodic orbits.
\end{theo}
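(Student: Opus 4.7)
The plan is to deduce the statement from the standard KAM Birkhoff normal form around any Lagrangian KAM torus, combined with a bifurcation-of-periodic-orbits argument in the spirit of Birkhoff--Lewis (which is the content of P{\"o}schel's statement). First I would fix a Lagrangian KAM torus $\mathcal{T}^{*}$ produced by Corollary \ref{cor: isochron nondeg}, with Diophantine frequency $\omega^{*}\in HD^{o}$ attached to some parameter $\iota^{*}\in B_{1/2}^{p+q}$ (here $q=0$ in the Lagrangian case), and recall that, by the iso-chronic non-degeneracy, the unperturbed frequency map $\iota\mapsto\varpi^{o}_{\iota}$ is a diffeomorphism.

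Next I would put the Hamiltonian $N'_{\iota^{*}}$ into a local analytic normal form in a neighborhood of $\mathcal{T}^{*}$. By the analytic KAM conjugacy of Theorem \ref{KAM}, there exist real-analytic symplectic coordinates $(I,\theta)$ near $\mathcal{T}^{*}$ in which
\[ N'_{\iota^{*}} = c + \langle \omega^{*}, I\rangle + \tfrac{1}{2}\langle Q(\theta) I, I\rangle + O_{3}(I), \]
with $Q$ analytic in $\theta$ and with averaged twist matrix $\bar Q = \int_{\T^{p}} Q(\theta)\,d\theta$ non-degenerate (this being precisely what the iso-chronic non-degeneracy translates to on the action side, since the averaged frequency map is $I\mapsto \omega^{*}+\bar Q\,I + O(|I|^{2})$). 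Conjugating further by a Birkhoff-type transformation, one pushes the $\theta$-dependence to arbitrarily high order in $I$, so that up to any desired order the Hamiltonian is $h(I) + R(I,\theta)$ with $\nabla h(0) = \omega^{*}$, $\nabla^{2} h(0) = \bar Q$ invertible, and $R$ of arbitrarily high order in $I$.

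Then I would approximate $\omega^{*}$ by a sequence of purely rational frequency vectors $\omega_{n}=(k_{1}^{n},\dots,k_{p}^{n})/q_{n}$, $\omega_{n}\to\omega^{*}$, and for each $n$ solve $\nabla h(I_{n})=\omega_{n}$ via the inverse function theorem (using $\det \bar Q\neq 0$), so that $I_{n}\to 0$. The torus $\{I=I_{n}\}$ is an invariant torus of $h$ entirely filled with periodic orbits of period $T_{n}=2\pi q_{n}$. The whole problem is that these periodic orbits come in a degenerate continuous family which the remainder $R$ generically destroys, but here the twist $\bar Q$ provides exactly the non-degeneracy required by the Birkhoff--Lewis / Weinstein--Moser bifurcation theorem, which produces at least $2p$ actual periodic orbits of $h+R$ of period close to $T_{n}$ and $O(|I_{n}|^{1/2})$-close to $\{I=I_{n}\}$; this is the content of the last assertion of P{\"o}schel's \cite[Theorem 2.1]{Poeschel}. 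Letting $n\to\infty$ these orbits cluster on $\{I=0\}=\mathcal{T}^{*}$, finishing the proof.

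The main obstacle is of course the bifurcation step: once the normal form and twist are in hand, everything else is essentially bookkeeping (choosing rational approximants of a Diophantine vector, inverting a non-degenerate map), but guaranteeing that periodic orbits of the perturbed analytic Hamiltonian actually persist near the resonant tori of the truncated normal form requires a careful variational / action-minimization argument, which is precisely what P{\"o}schel's theorem encapsulates and what I would invoke rather than reprove.
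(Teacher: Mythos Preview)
The paper itself offers no proof of this statement: it simply records it as the last assertion of P{\"o}schel's Theorem~2.1 (and refers also to Chierchia), treated as a black box. Your proposal is therefore not competing with any argument in the paper; rather, you are sketching the mechanism behind P{\"o}schel's result (Birkhoff normal form near the KAM torus, twist, rational approximation of the frequency, Birkhoff--Lewis persistence of periodic orbits). That sketch is essentially correct and is exactly the circle of ideas P{\"o}schel packages.

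One point deserves care. You write that the averaged twist matrix $\bar Q$ is non-degenerate ``this being precisely what the iso-chronic non-degeneracy translates to on the action side''. As literally stated, Corollary~\ref{cor: isochron nondeg} only assumes that the map $\iota\mapsto(\varpi^{o}_{\iota},\beta^{o}_{\iota})$ on the \emph{external parameter} $\iota$ is a diffeomorphism; nothing forces $\iota$ to coincide with the action $I$, so in that generality the twist $\bar Q$ in your normal form is not automatically non-degenerate. The identification you need is the one made in Example--Condition~\ref{isochron nondeg}, where $N^{o}_{\iota}(I)=N^{o}(\iota+I)$ and the iso-chronic non-degeneracy is exactly $\det\mathscr{H}(N^{o})\neq 0$; this is the setting in which the theorem is actually applied in Section~6. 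So your argument is fine once you make explicit that the parameter is the action (or, equivalently, that you are invoking the twist condition which P{\"o}schel's theorem itself requires), rather than deriving the twist from the abstract hypothesis of Corollary~\ref{cor: isochron nondeg} alone.
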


\section{Far from Collision Quasi-periodic Orbits of the Spatial Lunar Three-Body problem}

Now let us consider the Hamiltonian $F_{Kep}+\overline{F_{sec}^{n,n'}}+F_{secpert}^{n'+1}+F_{comp}^n$, seen as a system reduced by the SO(3)-symmetry. We now consider $\overline{F_{sec}^{n,n'}}$ as defined on a subset of the (SO(3)-reduced) phase space instead of the secular space, which has 4 degrees of freedom. 

To apply Corollary \ref{cor: isochron nondeg}, we start by verifying the non-degeneracy conditions in the system $F_{Kep}+\overline{F_{sec}^{n,n'}}$. As noted in Condition-Example \ref{isochron nondeg}, due to the proper degeneracy of the system, we just have to verify the non-degeneracy conditions in different scales.

Let us first consider the Kepler part:
$$F_{Kep}(L_{1}, L_{2})=-\dfrac{\mu_1^3 M_1^2}{2 {L_{1}}^2}-\dfrac{\mu_2^3 M_2^2}{2 L_{2}^2}.$$

Considered only as a function of $L_{1}$ and $L_{2}$, it is iso-chronically non-degenerate. 

To obtain the secular non-degeneracies of the system $ \overline{F_{sec}^{n, n'}}$, let us first consider the quadrupolar system $F_{quad}$. From Figure \ref{Fig: quadrupolar Phase LZ}, we see that for $C \neq G_2$, in the $(G_1,g_1)$-space, three types of regions are foliated by four kinds of closed curves of $F_{quad}(G_{1}, g_{1}; C, G_{2}, L_{1}, L_{2})$. They are regions around the elliptical singularities $B$
inside the separatrix of $A$ or $A'$, and the regions from $\{G_1=G_{1,max}\}$ and $\{G_1=G_{1,min}\}$ up to the nearest separatrix. These regions in turn correspond to three types of regions in the $(G_1,g_1, G_{2}, g_{2})$-space, foliated by invariant two-tori of the system $F_{quad}(G_{1}, g_{1}, G_{2}; C, L_{1}, L_{2})$. We build action-angle coordinates\footnote{See \cite{Arnold1989} for the method of building action-angle coordinates we use here.}, and let $\overline{\mathcal{I}}_1$ be an action variable in any one of these corresponding regions in the $(G_1,g_1)$-space. In Appendix \ref{non-degeneracySecularFrequencyMap}, we show that the quadrupolar frequency map is non-degenerate in a dense open set for almost all $\dfrac{C}{L_{1}}$ and $\dfrac{G_{2}}{L_{1}}$.  

Finally, for any fixed $C \neq 0$, the frequency map
$$(L_{1}, L_{2}, \overline{\mathcal{I}}_1, G_{2}) \mapsto \Bigl(\dfrac{\mu_1^3 M_1^2}{L_1^3},\,\dfrac{\mu_2^3 M_2^2}{L_2^3},\,\alpha^3 \nu_{quad, 1},\,\alpha^3 \nu_{quad, 2}\Bigr)$$
of $F_{Kep}+\alpha^{3} F_{quad}$ is a local diffeomorphism in a dense open set $\Omega$ of the phase space $\Pi$ symplectically reduced from the $\hbox{SO(3)}$-symmetry, in which $\nu_{quad, i}, i=1,2$ are the two frequencies of the quadrupolar system $F_{quad}(G_{1}, g_{1}, G_{2};C, L_{1}, L_{2})$ in the $(G_{i}, g_{i})$-plans respectively, which are independent of $\alpha$. 

For any $(n, n')$, the Lagrangian tori of the system $\overline{F_{sec}^{n,n'}}$ are $O(\alpha)$-deformations of Lagrangian tori of $\alpha^3 F_{quad}$. The frequency map of $F_{Kep}+\overline{F_{sec}^{n,n'}}$ are of the form
$$(L_{1}, L_{2}, \overline{\mathcal{J}}_1, G_{2}) \mapsto\Bigl(\dfrac{\mu_1^3 M_1^2}{L_1^3},\,\dfrac{\mu_2^3 M_2^2}{L_2^3},\,\alpha^3 \nu_{quad, 1}+O(\alpha^4),\,\alpha^3 \nu_{quad, 2}\,+O(\alpha^4)\Bigr),$$
which is thus non-degenerate in a open subset $\Omega'$ of $\Pi$ symplectically reduced from the $\hbox{SO(3)}$-symmetry for any choice of $n, n'$, with the relative measure of $\Omega'$ in $\Omega$ tends to 1 when $\alpha \to 0$, in which $\overline{\mathcal{J}}_1$ is defined analogously in the system $\overline{F_{sec}^{n,n'}}$ as  $\overline{\mathcal{I}}_1$ in $F_{quad}$. At the expense of restricting $\Omega'$ a little bit, we may further suppose that the transformation $\phi^{n} {\psi^{n' }}$ is well-defined. We fix $\alpha$ such that the set $\Omega'$ has sufficiently large measure in $\Omega$. 

In $\Omega'$, there exist $\bar{\gamma}>0, \bar{\tau} \ge 3$, such that the set of $(\alpha^{3} \bar{\gamma}, \bar{\tau})$-Diophantine invariant Lagrangian tori of $F_{Kep}+\overline{F_{sec}^{n, n'}}$ form a positive measure set whose measure is uniformly bounded for small $\alpha$ (Example-Condition \ref{isochron nondeg}).  By definition of $\Omega'$, near such a torus with action variables $(L_{1}^{0}, L_{2}^{0}, \overline{\mathcal{J}}_1^{0}, G_{2}^{0})$, there exists a $\lambda$-neighborhood for some $\lambda > 0$, such that the torsions of the Lagrangian tori of $F_{Kep}+\overline{F_{sec}^{n,n'}}$ do not vanish in this neighborhood. Let 
$$(L_{1}, L_{2}, \overline{\mathcal{J}}_1, G_{2})=\underline{\phi}^{\lambda}(L_{1}^{\lambda}, L_{2}^{\lambda}, \overline{\mathcal{J}}_1^{\lambda}, G_{2}^{\lambda}):=(L_{1}^{0} + \lambda \, L_{1}^{\lambda}, L_{2}^{0}+\lambda\, L_{2}^{\lambda}, \overline{\mathcal{J}}_1^{0}+\lambda \, \overline{\mathcal{J}}_1^{\lambda}, G_{2}^{0}+\lambda \, G_{2}^{\lambda}).
$$
Thus for any $(L_{1}^{\lambda}, L_{2}^{\lambda}, \overline{\mathcal{J}}_1^{\lambda}, G_{2}^{\lambda}) \in B_{1}^{4}$, and for any choice of $n, n'$,  the frequency map of the Lagrangian torus of $F_{Kep}+ \overline{F_{sec}^{n, n'}}$ corresponding to $(L_{1}^{0}+\lambda \, L_{1}^{\lambda}, L_{2}^{0}+\lambda \, L_{2}^{\lambda}, \overline{\mathcal{I}}_1^{0}+ \lambda \, \overline{\mathcal{I}}_1^{\lambda}, G_{2}^{0}+\lambda \, G_{2}^{\lambda})$ is non-degenerate. The existence of $\lambda$ follows from the definition of $\Omega'$.

We may now apply Corollary \ref{cor: isochron nondeg} for Lagrangian tori (\emph{i.e.} $p=4,q=0$) near the torus of $F_{Kep}+\overline{F_{sec}^{n, n'}}$ with action variables $(L_{1}^{0}, L_{2}^{0}, \overline{\mathcal{J}}_1^{0}, G_{2}^{0})$. We take $N'=\underline{\phi}^{\lambda*}{\psi^{n' *}} \phi^{n*} F $ (See Section \ref{SecularAndSecularIntegrableSystems} for definition of $\psi^{n}$ and $\phi^{n'}$), $N^{o}=\underline{\phi}^{\lambda*} (F_{Kep}(L_{1}, L_{2})+\overline{F_{sec}^{n,n'}} (L_{1}^{\lambda}, L_{2}^{\lambda}, \overline{\mathcal{J}}_1^{\lambda}, G_{2}^{\lambda}; C))$, with parameter $(L_{1}, L_{2}, \overline{\mathcal{J}}_1, G_{2}) \in B_{1}^{4}$ and perturbation $\underline{\phi}^{\lambda*} (F_{secpert}^{n'+1}+F_{comp}^n)$, whose order of smallness with respect to $\alpha$ can be made arbitrarily high by choosing large enough integers $n$ and $n'$. In particular, we may choose large enough $n$ and $n'$ so that Corollary \ref{cor: isochron nondeg} is applicable. 

The existence of an invariant Lagrangian torus of $\underline{\phi}^{\lambda*}\psi^{n' *} \phi^{n*} F $ (and thus of $F$) close to the Lagrangian torus of $F_{Kep}+\overline{F_{sec}^{n, n'}}$ with action variables $(L_{1}^{0}, L_{2}^{0}, \overline{\mathcal{J}}_1^{0}, G_{2}^{0})$ thus follows. We apply Corollary \ref{cor: isochron nondeg} near other  $(\alpha^{3} \bar{\gamma}, \bar{\tau})$-Diophantine invariant Lagrangian tori of $F_{Kep}+\overline{F_{sec}^{n, n'}}$ in $\Omega'$ analogously.

{In such a way, we get a set of positive measure of Lagrangian tori in the perturbed system $N'=\underline{\phi}^{\lambda*}{\psi^{n' *}} \phi^{n*} F$ (and thus of $F$) for any fixed $C > 0$. It remains to show that most of these Lagrangian tori stay away from the collisions. The transformations we have used to build the secular and secular-integrable systems are of order $O(\alpha)$, which shall bring an $O(\alpha)$-deformation to the collision set. Therefore, for fixed $C$ and $G_{2}$ (independent of $\alpha$), most of these invariant Lagrangian tori stay away from the collision set, provided $\alpha$ is small enough.}

\begin{theo}\label{FarFromCollisionMotions} For each fixed $C$, there exists a positive measure of 4-dimensional Lagrangian tori in the spatial three-body problem reduced by the SO(3)-symmetry, which are small perturbations of the corresponding Lagrangian tori of the system $F_{Kep}+ \alpha^3 F_{quad}$ reduced by the SO(3)-symmetry. 
\end{theo}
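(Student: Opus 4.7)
The plan is to deduce the theorem by applying the iso-chronous KAM theorem (Corollary~\ref{cor: isochron nondeg}) to the Hamiltonian
$$\psi^{n'*}\phi^{n*}F = F_{Kep}+\overline{F_{sec}^{n,n'}}+F_{secpert}^{n'+1}+F_{comp}^{n}$$
on the SO(3)-reduced phase space, treating $F_{Kep}+\overline{F_{sec}^{n,n'}}$ as the integrable approximation and the remainders as the perturbation, whose size can be made $O(\alpha^{M})$ for arbitrarily large $M$ by choosing $n,n'$ large.

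First I would verify the iso-chronous non-degeneracy of $F_{Kep}+\overline{F_{sec}^{n,n'}}$. Because there are two scales---the $L$-scale carried by $F_{Kep}$ and the $\alpha^{3}$-scale carried by the secular part---the scale-by-scale principle recorded in Example-Condition~\ref{isochron nondeg} reduces this to two independent checks. The Keplerian piece $F_{Kep}(L_{1},L_{2})$ is manifestly iso-chronously non-degenerate in $(L_{1},L_{2})$, while the quadrupolar frequency map $(\overline{\mathcal{I}}_{1},G_{2})\mapsto(\nu_{quad,1},\nu_{quad,2})$ is non-degenerate on a dense open subset of the secular space for almost every choice of $C/L_{1}$ and $G_{2}/L_{1}$; this is precisely the torsion statement proved in Appendix~\ref{non-degeneracySecularFrequencyMap}. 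Since $\overline{F_{sec}^{n,n'}}-\alpha^{3}F_{quad}=O(\alpha^{4})$, these non-degeneracies descend to $F_{Kep}+\overline{F_{sec}^{n,n'}}$ on an open set $\Omega'\subset\Omega$ whose relative measure in $\Omega$ tends to $1$ as $\alpha\to 0$.

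Next I would localize. Around any unperturbed torus with actions $(L_{1}^{0},L_{2}^{0},\overline{\mathcal{J}}_{1}^{0},G_{2}^{0})\in\Omega'$ I use the rescaling $\underline{\phi}^{\lambda}$ introduced above the theorem to bring the parameter into $B_{1}^{4}$ with a frequency map that is a local diffeomorphism. By the scaling argument of Example-Condition~\ref{isochron nondeg}, the image contains a set of $(\alpha^{3}\bar{\gamma},\bar{\tau})$-Diophantine vectors of measure uniformly bounded away from zero as $\alpha\to 0$. With $p=4$, $q=0$ and integrable model $N^{o}=\underline{\phi}^{\lambda*}(F_{Kep}+\overline{F_{sec}^{n,n'}})$, the allowed perturbation size in Corollary~\ref{cor: isochron nondeg} is $\hbox{Cst}\,(\alpha^{3}\bar{\gamma})^{k}$, whereas $\underline{\phi}^{\lambda*}(F_{secpert}^{n'+1}+F_{comp}^{n})$ is of order $O(\alpha^{n'+2}+\alpha^{3(n+2)/2})$. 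Choosing $n,n'$ large enough, the hypothesis is met and Corollary~\ref{cor: isochron nondeg} produces a positive-measure family of Lagrangian KAM tori of $\underline{\phi}^{\lambda*}\psi^{n'*}\phi^{n*}F$, each an $O(\alpha)$-deformation of the corresponding torus of $F_{Kep}+\alpha^{3}F_{quad}$. Collecting these families as $(L_{1}^{0},L_{2}^{0},\overline{\mathcal{J}}_{1}^{0},G_{2}^{0})$ ranges over the Diophantine tori of $\Omega'$ and pulling back by the symplectic conjugacies yields, for any fixed $C$, a positive-measure set of Lagrangian tori of $F$.

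Finally I would check that most of these tori stay away from collisions. The transformations $\phi^{n}$, $\psi^{n'}$ and $\underline{\phi}^{\lambda}$ differ from the identity by $O(\alpha^{3/2})$, hence deform the collision locus by $O(\alpha)$ at most; since $\tilde{\mathcal{P}}$ and $\hat{\mathcal{P}}$ were already chosen in the asynchronous region bounded away from collisions and with almost full relative measure, for small enough $\alpha$ the images of the KAM tori with fixed $C,G_{2}$ still lie at uniformly positive distance from the collision set. The hard part of the whole argument is the quadrupolar torsion in Appendix~\ref{non-degeneracySecularFrequencyMap}: action-angle coordinates for $F_{quad}$ in each of the three regions in Lidov--Ziglin's portraits are not elementary, and extracting a non-vanishing Hessian of the reduced Hamiltonian against $(\overline{\mathcal{I}}_{1},G_{2})$ requires a careful Taylor expansion near $\{G_{1}=G_{1,min}\}$ combined with analyticity in the remaining regions; once this is secured, everything else is a routine combination of the scaling trick for properly degenerate systems with the iso-chronous KAM theorem.
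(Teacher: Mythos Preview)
Your proposal is correct and follows essentially the same route as the paper: the argument in Section~6 preceding the theorem statement likewise verifies iso-chronous non-degeneracy scale by scale via Example-Condition~\ref{isochron nondeg} and Appendix~\ref{non-degeneracySecularFrequencyMap}, localizes with the rescaling $\underline{\phi}^{\lambda}$, applies Corollary~\ref{cor: isochron nondeg} with $p=4$, $q=0$ after choosing $n,n'$ large enough to beat the $(\alpha^{3}\bar{\gamma})^{k}$ threshold, and finishes with the same $O(\alpha)$-deformation argument for the collision set. Your identification of the quadrupolar torsion as the only non-routine ingredient is also accurate.
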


By rotation around $\vec{C}$, we obtain a positive measure of 5-dimensional invariant  tori in the lunar spatial three-body problem.

We establish the following types of quasi-periodic motions in the spatial lunar three-body problem (the required non-degeneracy conditions are collected in Appendix \ref{non-degeneracySecularFrequencyMap}):
\begin{itemize}
\item Motions along which $g_1$ librate around $\dfrac{\pi}{2}$, corresponds to the phase portraits around the elliptical singularity $B$; 
\item Motions along which $G_1$ remains large (eventually near $\{G_{1}=G_{1, max}\}$) while $g_1$ decreases;
\item Motions along which $G_1$ remains large (eventually near $\{G_{1}=G_{1, max}\}$) while $g_1$ increases;
\item Motions along which $G_1$ remains small but bounded from zero, while $g_1$ increases.
\end{itemize}
 
From Theorem \ref{Poeschel}, we get

\begin{theo}\label{AccumulatingPeriodicOrbits} There exist periodic orbits accumulating each of the KAM tori thus established in the spatial three-body problem reduced by the SO(3)-symmetry.
\end{theo}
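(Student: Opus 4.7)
The plan is to deduce this theorem as an essentially immediate consequence of Theorem \ref{Poeschel}, reusing in its entirety the setting that was established for Theorem \ref{FarFromCollisionMotions}. Pöschel's theorem and the iso-chronic KAM Corollary \ref{cor: isochron nondeg} share exactly the same hypotheses, so once the latter has been verified at a given Lagrangian KAM torus, the former automatically supplies the accumulating periodic orbits.

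Concretely, I would first recall the construction used in Theorem \ref{FarFromCollisionMotions}. For fixed $C > 0$ and sufficiently large integers $n, n'$, around each $(\alpha^{3} \bar{\gamma}, \bar{\tau})$-Diophantine Lagrangian torus of $F_{Kep}+\overline{F_{sec}^{n,n'}}$ lying in the open set $\Omega'$, the rescaling $\underline{\phi}^{\lambda}$ casts the conjugated Hamiltonian $\underline{\phi}^{\lambda*}\psi^{n'*}\phi^{n*}F$ in the parameter-dependent form $N^o_\iota + (\text{small perturbation})$ required by Corollary \ref{cor: isochron nondeg}, with $p=4$, $q=0$, and $\iota = (L_{1}^{\lambda}, L_{2}^{\lambda}, \overline{\mathcal{J}}_1^{\lambda}, G_{2}^{\lambda}) \in B_{1}^{4}$. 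The iso-chronic non-degeneracy of $N^o_\iota$ has already been verified scale by scale through the proper-degenerate argument from Example-Condition \ref{isochron nondeg}, using the Keplerian torsion together with the quadrupolar torsion established in Appendix \ref{non-degeneracySecularFrequencyMap}; and the size of $\underline{\phi}^{\lambda*}(F_{secpert}^{n'+1}+F_{comp}^n)$ can be driven below the threshold $\varepsilon$ of Corollary \ref{cor: isochron nondeg} by enlarging $n$ and $n'$. Hence the hypotheses of Theorem \ref{Poeschel} hold at each such torus.

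Applying Theorem \ref{Poeschel} in this setting then yields, in the rescaled and normalized coordinates, periodic orbits accumulating every surviving Lagrangian KAM torus of $N'=\underline{\phi}^{\lambda*}\psi^{n'*}\phi^{n*}F$. Pulling back through the analytic symplectomorphisms $\underline{\phi}^{\lambda}, \psi^{n'}$ and $\phi^{n}$, which are diffeomorphisms and therefore map periodic orbits to periodic orbits and preserve the closure relation, transfers this accumulation property to the corresponding Lagrangian KAM tori of the original Hamiltonian $F$ in the SO(3)-reduced phase space. Since the argument is purely local at each torus produced by Theorem \ref{FarFromCollisionMotions}, it applies to every one of them.

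The main (and in fact only) point to verify is that the iso-chronic non-degeneracy and smallness conditions used to invoke Corollary \ref{cor: isochron nondeg} are exactly those required by Pöschel's theorem, so that no additional estimate is needed. I expect no substantial obstacle beyond bookkeeping: the genuine analytic work has already been carried out in the proofs of Theorem \ref{FarFromCollisionMotions} and Proposition \ref{averaging}, and in the non-degeneracy verification of the appendix.
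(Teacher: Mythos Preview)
Your proposal is correct and follows exactly the paper's approach: the paper simply writes ``From Theorem \ref{Poeschel}, we get'' before stating Theorem \ref{AccumulatingPeriodicOrbits}, invoking P{\"o}schel's result under the very hypotheses already verified for Corollary \ref{cor: isochron nondeg} in the proof of Theorem \ref{FarFromCollisionMotions}. Your additional bookkeeping about pulling back through $\underline{\phi}^{\lambda}, \psi^{n'}, \phi^{n}$ only makes explicit what the paper leaves implicit.
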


Let us now consider the elliptic isotropic tori corresponding to the elliptic singularity $B$. Set $p=3, q=1$. The frequency of an elliptical isotropic torus with parameters $(L_1,L_2,G_2,C)$ corresponding to the only elliptic quadrupolar singularity $B$ in Figure \ref{Fig: quadrupolar Phase LZ} is of the form
$$\Bigl(\dfrac{\mu_1^3 M_1^2}{L_1^3},\,\dfrac{\mu_2^3 M_2^2}{L_2^3},\,\alpha^3 \nu_{quad, 2}+O(\alpha^4),\,\alpha^3 \nu_{quadn, G_2}\,+O(\alpha^4)\Bigr),$$
in which $\nu_{quadn, G_2}$ denotes the quadrupolar normal frequency of the elliptical isotropic torus. We show in Appendix \ref{non-degeneracySecularFrequencyMap} that the quadrupolar frequency map $$(G_2,C) \to (\nu_{quad, 2},\, \nu_{quadn, G_2})$$ is non-degenerate for almost all $\dfrac{C}{L_{1}}, \dfrac{G_{2}}{L_{1}}$. Set $C=C^{0}+\lambda C^{\lambda}$. We may now apply Corollary \ref{cor: isochron nondeg} in the same way as for Lagrangian tori, with parameters $L_{1}^{\lambda}, L_{2}^{\lambda}, G_{2}^{\lambda}, C^{\lambda}$ to obtain a positive 4-dimensional Lebesgue measure set of 3-dimensional isotropic elliptic tori in the direct product of the phase space of the reduced system
 of the spatial three-body problem (by the SO(3)-symmetry) with the space of parameters $C$. Let us call this 4-dimensional Lebesgue measure a ``product measure''.

\begin{theo}\label{elliptictori} There exists a positive product measure of 3-dimensional isotropic elliptic tori in the spatial three-body problem reduced by the SO(3)-symmetry, which are small perturbations of the isotropic tori corresponding to the elliptic secular singularity of $F_{Kep}+ \alpha^3 F_{quad}$ reduced by the SO(3)-symmetry. They give rise to 4-dimensional isotropic tori of the spatial three-body problem. 
\end{theo}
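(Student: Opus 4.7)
The plan is to invoke Corollary \ref{cor: isochron nondeg} in the setting $p=3$, $q=1$, $q'=1$, $q''=0$, taking as the unperturbed system the integrable secular-integrable Hamiltonian $F_{Kep}+\overline{F_{sec}^{n,n'}}$ restricted to a neighborhood of the $\mathrm{SO}(3)$-reduced invariant isotropic $3$-torus attached to the elliptic quadrupolar equilibrium $B$, and the perturbation coming from $F_{secpert}^{n'+1}+F_{comp}^n$ after pulling back by $\phi^n \circ \psi^{n'}$. First, I would fix $C \neq 0$ and choose parameters $(C,G_2)$ in the open region (case (1), (2), or (3) of Section \ref{QuadrupolarDynamics}) where $B$ exists; since $B$ is shown to be Morse for a dense open set of parameters (Proposition \ref{Quadrupolar Singularity non-degeneracy}), it persists as a normally elliptic fixed point of $\overline{F_{sec}^{n,n'}}$ for small $\alpha$, and after unfolding it sweeps out an invariant isotropic $3$-torus parametrised by $(l_1,l_2,g_2)$, with a one-dimensional elliptic normal bundle in the $(G_1,g_1)$-direction.

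Second, I would collect the unperturbed frequencies. The three tangential frequencies are the two Keplerian frequencies $\mu_i^3 M_i^2/L_i^3$ and the secular frequency $\alpha^3\nu_{quad,2}+O(\alpha^4)$ of $g_2$, while the single Floquet exponent is $\alpha^3\nu_{quadn,G_2}+O(\alpha^4)$, the linearised quadrupolar frequency in $(G_1,g_1)$ around $B$. To check isochronic non-degeneracy I rely on the ``scale-by-scale'' principle of Example-Condition \ref{isochron nondeg}: the Keplerian block in $(L_1,L_2)$ is manifestly non-degenerate, and Appendix \ref{non-degeneracySecularFrequencyMap} asserts that the quadrupolar map $(G_2,C)\mapsto(\nu_{quad,2},\nu_{quadn,G_2})$ is a local diffeomorphism for almost every $C/L_1$ and $G_2/L_1$. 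Combining the two blocks, the full parameter-to-frequency map of $F_{Kep}+\alpha^3 F_{quad}$ is a diffeomorphism on a dense open set $\Omega_B$ of parameters, and this persists up to an $O(\alpha)$ deformation for $F_{Kep}+\overline{F_{sec}^{n,n'}}$ when $\alpha$ is small.

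Third, around a point $(L_1^0,L_2^0,G_2^0,C^0)\in\Omega_B$ I would rescale the parameter $(L_1,L_2,G_2,C)=(L_1^0+\lambda L_1^\lambda,\,L_2^0+\lambda L_2^\lambda,\,G_2^0+\lambda G_2^\lambda,\,C^0+\lambda C^\lambda)$ with $\lambda$ so small that the frequency map is a diffeomorphism on $B_1^{4}$ onto its image. Choosing $n,n'$ large enough that $F_{secpert}^{n'+1}+F_{comp}^n$ falls below the tolerance $\mathrm{Cst}\,(\alpha^3\bar\gamma)^k$ of Theorem \ref{KAM}, Corollary \ref{cor: isochron nondeg} produces, for every $(\varpi,\beta)\in HD^o$, a unique parameter yielding a symplectic conjugacy of the perturbed Hamiltonian to an element of $N_{\varpi,\beta}$. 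The positive Lebesgue measure of $(\bar\gamma,\bar\tau)$-Diophantine frequencies in $\mathbb{R}^{p+q}=\mathbb{R}^4$, pushed back by the local inverse of the frequency map, then yields a positive measure of persistent normally elliptic invariant isotropic $3$-tori in the four-parameter family. Treating $C$ as the fourth coordinate and applying Fubini delivers the claimed positive product measure. Finally, since the generating transformations are $O(\alpha)$-small, the resulting tori stay away from the collision set for $\alpha$ small enough, exactly as in the proof of Theorem \ref{FarFromCollisionMotions}, and rotating back by $\mathrm{SO}(2)$ around $\vec{C}$ yields $4$-dimensional isotropic tori in the unreduced spatial three-body problem.

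The main obstacle is the verification that the full $4\times 4$ isochronic frequency map is non-degenerate, which reduces via Example-Condition \ref{isochron nondeg} to showing that $(G_2,C)\mapsto(\nu_{quad,2},\nu_{quadn,G_2})$ is a local diffeomorphism; this requires an explicit computation of the quadrupolar Hessian at $B$ together with the derivative of the $g_2$-frequency, which is genuinely delicate and is precisely the content of Appendix \ref{non-degeneracySecularFrequencyMap}. Once this non-degeneracy and the avoidance of low-order resonances between $(\nu_1,\nu_2,\nu_{quad,2})$ and the single elliptic normal frequency are confirmed on a dense open set, everything else is a direct quotation of Corollary \ref{cor: isochron nondeg}.
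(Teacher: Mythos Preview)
Your proposal is correct and follows essentially the same approach as the paper: set $p=3$, $q=1$, use $(L_1,L_2,G_2,C)$ as the four parameters (with $C=C^0+\lambda C^\lambda$), invoke the scale-by-scale non-degeneracy of Example-Condition~\ref{isochron nondeg} together with Lemma~\ref{lowerdimensional non-degeneracy} for the quadrupolar block $(G_2,C)\mapsto(\nu_{quad,2},\nu_{quadn,G_2})$, and then apply Corollary~\ref{cor: isochron nondeg} exactly as for the Lagrangian tori. Your additional remarks on collision avoidance and the passage to $4$-tori by rotation about $\vec{C}$ are also in line with the paper's treatment.
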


\appendix

\section{Estimates of the Perturbing Functions}  \label{section:estimates}

We first recall some hypothesis and notations from the beginning of Section \ref{SecularAndSecularIntegrableSystems}: 
\begin{itemize}
\item the masses $m_{0}, m_{1}, m_{2}$ are fixed arbitrarily;
\item Let $e_{1}^{\vee} <  e_{1}^{\wedge}, e_{2}^{\vee} < e_{2}^{\wedge}$ be positive numbers. We assume that
$$0<e_{1}^{\vee} < e_{1} < e_{1}^{\wedge}<1,\quad 0<e_{2}^{\vee} < e_{2} < e_{2}^{\wedge}<1.$$

\item  Let $a_{1}^{\vee} < a_{1}^{\wedge}$ be two positive real numbers. We assume that
$$a_{1}^{\vee} < a_{1} < a_{1}^{\wedge};$$
\item $\alpha=\dfrac{a_{1}}{a_{2}} < \alpha^{\wedge}:=\min\{\dfrac{1-e_{2}^{\wedge}}{80}, \dfrac{1-e_{2}^{\wedge}}{2 \sigma_{0}}, \dfrac{1-e_{2}^{\wedge}}{2 \sigma_{1}}\};$
\end{itemize}
From the relations $$a_{i} (1-e_{i}) \le \|Q_{i}\| \le a_{i} (1+e_{i}) \le 2 a_{i},$$
we obtain in particular that $\dfrac{\|Q_1\|}{\,\|Q_2\|} \le \dfrac{1}{\hat{\sigma}},$ for $\hat{\sigma}=\max\{\sigma_{0}, \sigma_{1}\}$.
\begin{lem}(Lemma 1.1 in \cite{QuasiMotionPlanar}) \label{Legendre Expansion} The expansion 
$$F_{pert}= - \mu_1 m_2 \sum^{\phantom{ssss}}_{n \ge 2} \sigma_n P_n(\cos{\zeta}) \dfrac{\|Q_1\|^n}{\, \, \, \|Q_2\|^{n+1}}$$
 is convergent in $\dfrac{\|Q_1\|}{\,\|Q_2\|} \le \dfrac{1}{\hat{\sigma}},$ (and therefore when $\alpha < \alpha^{
 \wedge}$) where $P_n$ is the n-th Legendre polynomial, $\zeta$ is the angle between the two vectors $Q_{1}$ and $Q_{2}$, $\hat{\sigma}= max\{\sigma_0,\sigma_1\}$ and $ \sigma_n= \sigma_0^{n-1}+(-1)^n \sigma_1^{n-1}$.
\end{lem}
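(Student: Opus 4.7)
The proof will rest on the classical generating-function identity
$$\frac{1}{\sqrt{1-2 r\cos\zeta+r^{2}}}=\sum_{n\ge0}P_{n}(\cos\zeta)\,r^{n},\qquad |r|<1,$$
applied separately to the two $\tfrac1{\|Q_{2}\mp\sigma_{i}Q_{1}\|}$ terms that make up $F_{pert}$. The plan is to first establish the bound $\sigma_{i}\|Q_{1}\|/\|Q_{2}\|<1$ that guarantees convergence, then expand and recombine the two series, and finally check that the $n=1$ contributions cancel so that the sum indeed starts at $n\ge 2$.

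First, under the standing hypotheses we have $a_{1}(1-e_{1})\le\|Q_{1}\|\le 2a_{1}$ and $a_{2}(1-e_{2})\le\|Q_{2}\|$; combined with $\alpha<\alpha^{\wedge}\le\tfrac{1-e_{2}^{\wedge}}{2\hat{\sigma}}$ this yields
$$\frac{\|Q_{1}\|}{\|Q_{2}\|}\;\le\;\frac{2a_{1}}{a_{2}(1-e_{2})}\;\le\;\frac{2\alpha}{1-e_{2}^{\wedge}}\;\le\;\frac{1}{\hat{\sigma}},$$
so that $\sigma_{0}\|Q_{1}\|/\|Q_{2}\|<1$ and $\sigma_{1}\|Q_{1}\|/\|Q_{2}\|<1$ both hold with a uniform gap from $1$ (this is where the choice of $\alpha^{\wedge}$ gets used).

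Next I expand. Writing $\zeta$ for the angle between $Q_{1}$ and $Q_{2}$ and using $P_{n}(-x)=(-1)^{n}P_{n}(x)$,
\begin{align*}
\frac{1}{\|Q_{2}-\sigma_{0}Q_{1}\|}&=\frac{1}{\|Q_{2}\|}\sum_{n\ge 0}P_{n}(\cos\zeta)\Bigl(\frac{\sigma_{0}\|Q_{1}\|}{\|Q_{2}\|}\Bigr)^{n},\\
\frac{1}{\|Q_{2}+\sigma_{1}Q_{1}\|}&=\frac{1}{\|Q_{2}\|}\sum_{n\ge 0}(-1)^{n}P_{n}(\cos\zeta)\Bigl(\frac{\sigma_{1}\|Q_{1}\|}{\|Q_{2}\|}\Bigr)^{n}.
\end{align*}
Subtracting the $n=0$ terms $\tfrac{1}{\|Q_{2}\|}$ (which kills the $n=0$ contribution), multiplying by $\tfrac{1}{\sigma_{0}}$ and $\tfrac{1}{\sigma_{1}}$ respectively, and summing according to the definition of $F_{pert}$ gives
$$F_{pert}=-\mu_{1}m_{2}\sum_{n\ge 1}\bigl(\sigma_{0}^{n-1}+(-1)^{n}\sigma_{1}^{n-1}\bigr)P_{n}(\cos\zeta)\,\frac{\|Q_{1}\|^{n}}{\|Q_{2}\|^{n+1}}.$$
The coefficient at $n=1$ is $\sigma_{0}^{0}-\sigma_{1}^{0}=1-1=0$, so the linear (dipole) term cancels; this is the only non-automatic step, and it is a direct consequence of the Jacobi choice of the fictitious mass $\mu_{1}$ (equivalently, $Q_{1}$ is attached to the centre of mass of the inner pair).

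The remaining sum is precisely the stated expansion, with coefficients $\sigma_{n}=\sigma_{0}^{n-1}+(-1)^{n}\sigma_{1}^{n-1}$. Absolute convergence follows from $|P_{n}(\cos\zeta)|\le 1$ and the geometric bound $\bigl(\sigma_{i}\|Q_{1}\|/\|Q_{2}\|\bigr)^{n}$ established above, so in fact the series converges uniformly on the closed region $\|Q_{1}\|/\|Q_{2}\|\le 1/\hat{\sigma}$. I do not foresee any real obstacle here; the only point that requires some care is matching the convergence radius to $1/\hat{\sigma}$ rather than $1/\sigma_{0}$ or $1/\sigma_{1}$ individually, but this is immediate since the worse of the two ratios is $\hat{\sigma}\|Q_{1}\|/\|Q_{2}\|$.
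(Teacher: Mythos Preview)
Your argument is correct and is the standard derivation: expand each of the two inverse-distance terms via the Legendre generating function, subtract the $n=0$ pieces coming from $1/\|Q_2\|$, and observe that the $n=1$ coefficients cancel because $\sigma_0^{0}-\sigma_1^{0}=0$. The paper does not supply its own proof of this lemma; it simply quotes it from F\'ejoz~\cite{QuasiMotionPlanar}, so your write-up is in fact more explicit than what appears here, and it matches the classical computation one finds in that reference.

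Two small remarks. First, your closing claim of uniform convergence on the \emph{closed} region $\|Q_1\|/\|Q_2\|\le 1/\hat{\sigma}$ is not literally true: at the boundary $\hat{\sigma}\|Q_1\|/\|Q_2\|=1$ the generating-function series diverges when $\cos\zeta=1$, and the combined series inherits this (the general term does not tend to zero). The paper's ``$\le$'' is presumably a harmless imprecision for ``$<$'', and in any case under $\alpha<\alpha^{\wedge}$ one is strictly inside, as you yourself note. Second, the vanishing of the dipole term is really just the arithmetic identity $1-1=0$; the physical interpretation you give (that $Q_2$ is measured from the inner centre of mass) is the reason the coefficients take the specific form $\sigma_0^{n-1}+(-1)^n\sigma_1^{n-1}$ in the first place, but attributing the cancellation to ``the Jacobi choice of $\mu_1$'' is slightly off---it is the choice of the origin of $Q_2$, not of the reduced mass, that matters here.
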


As in \cite{QuasiMotionPlanar}, we have:

\begin{lem} \label{Perturbing estimate}
$$|F_{pert}| \le \hbox{Cst}\, \alpha^3,$$
for some constant Cst only depending on $m_{0}, m_{1}, m_{2}, e_{1}^{\vee}, e_{1}^{\wedge}, e_{2}^{\vee}, e_{2}^{\wedge}$.
\end{lem}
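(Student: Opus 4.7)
The plan is to apply the Legendre expansion of Lemma \ref{Legendre Expansion} and bound the series term-by-term, with the key observation that the $n$-th summand actually carries a factor $\alpha^{n+1}$ (not merely $\alpha^{2n}$) once the relation $a_{2}=a_{1}/\alpha$ is exploited.

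First I would write, from Lemma \ref{Legendre Expansion},
$$|F_{pert}| \le \mu_{1} m_{2} \sum_{n \ge 2} |\sigma_{n}|\,|P_{n}(\cos\zeta)|\,\frac{\|Q_{1}\|^{n}}{\|Q_{2}\|^{n+1}},$$
and then estimate each factor in an elementary way. Since $|\cos\zeta|\le 1$, one has $|P_{n}(\cos\zeta)|\le 1$. The combinatorial coefficient satisfies $|\sigma_{n}|\le \sigma_{0}^{n-1}+\sigma_{1}^{n-1}\le 2\hat{\sigma}^{n-1}$. From the bounds recalled in the excerpt, $\|Q_{1}\|\le a_{1}(1+e_{1})\le 2a_{1}$ and $\|Q_{2}\|\ge a_{2}(1-e_{2}^{\wedge})$. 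Combining and using $a_{2}=a_{1}/\alpha$,
$$\frac{\|Q_{1}\|^{n}}{\|Q_{2}\|^{n+1}} \le \frac{(2a_{1})^{n}}{\bigl(a_{2}(1-e_{2}^{\wedge})\bigr)^{n+1}} = \frac{1}{a_{1}(1-e_{2}^{\wedge})}\left(\frac{2\alpha}{1-e_{2}^{\wedge}}\right)^{n}\alpha.$$
It is at this step that the expected power $\alpha^{3}$ emerges: the $n=2$ term already contributes $\alpha^{3}$, and higher-order terms produce higher powers of $\alpha$.

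Plugging these estimates in and pulling $\alpha^{3}$ out of the $n=2$ slot,
$$|F_{pert}| \le \frac{2\mu_{1} m_{2}\,\alpha^{3}}{\hat{\sigma}\,a_{1}\,(1-e_{2}^{\wedge})} \sum_{n\ge 2}\left(\frac{2\hat{\sigma}\alpha}{1-e_{2}^{\wedge}}\right)^{n}\!\!\cdot\!\left(\frac{1-e_{2}^{\wedge}}{2\hat{\sigma}\alpha}\right)^{2}\cdot\alpha^{n-2}\cdot\text{(rewriting)},$$
which after tidying is
$$|F_{pert}| \le \frac{C_{1}\,\alpha^{3}}{a_{1}(1-e_{2}^{\wedge})^{3}} \sum_{n\ge 2}\left(\frac{2\hat{\sigma}\alpha}{1-e_{2}^{\wedge}}\right)^{n-2}$$
for a constant $C_{1}$ depending only on the masses. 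The hypothesis $\alpha<\alpha^{\wedge}\le (1-e_{2}^{\wedge})/(2\hat{\sigma})$ ensures the ratio in the geometric series is strictly less than $1$ (in fact $<1/40$ since $\alpha^{\wedge}\le(1-e_{2}^{\wedge})/80$), so the series is bounded by a constant depending only on the masses and $e_{2}^{\wedge}$. Finally $a_{1}\ge a_{1}^{\vee}$ absorbs the $1/a_{1}$ factor, and we obtain $|F_{pert}|\le \mathrm{Cst}\,\alpha^{3}$ with $\mathrm{Cst}$ depending only on the listed parameters.

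There is essentially no obstacle: the only subtlety is the bookkeeping showing that the naive lower bound $\|Q_{1}\|^{n}/\|Q_{2}\|^{n+1}\sim \alpha^{n}$ is actually $O(\alpha^{n+1})$ because $1/a_{2}=\alpha/a_{1}$ brings an extra factor of $\alpha$. With that recognized, the estimate is an immediate geometric-series computation using the explicit upper bound on $\alpha$ in $\alpha^{\wedge}$.
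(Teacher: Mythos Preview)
Your proof is correct and follows essentially the same route as the paper: expand $F_{pert}$ via Lemma~\ref{Legendre Expansion}, bound each factor, and sum a geometric series in $\alpha$. The only cosmetic differences are in the term-by-term constants: the paper uses $|\sigma_{n}|\le 1$ (from $\sigma_{0}+\sigma_{1}=1$) and the cruder bound $|P_{n}(\cos\zeta)|\le 3^{n}$, whereas you use $|\sigma_{n}|\le 2\hat{\sigma}^{n-1}$ and the sharper classical bound $|P_{n}(\cos\zeta)|\le 1$; either choice leads to a convergent geometric series under the hypothesis $\alpha<\alpha^{\wedge}$.
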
        
\begin{proof} Since (\cite[p.129]{Kellogg}) 
$$|P_n(\cos{\zeta})| \le (\sqrt{2}+1)^{n} \le 3^{n},$$
and 
\begin{align*}
|\sigma_{n}| & = |\sigma_0^{n-1}+(-1)^n \sigma_1^{n-1}|\\
                   & \le |\sigma_{0}^{n-1}|+|\sigma_{1}^{n-1}|\\
                 & = \dfrac{m_{0}^{n-1}}{(m_{0}+m_{1})^{n-1}} +  \dfrac{m_{1}^{n-1}}{(m_{0}+m_{1})^{n-1}}< 1,
\end{align*} 
we have \small
\begin{align*} 
|F_{pert}|&=  \mu_1 m_2 \left|\sum^{\phantom{ssss}}_{n \ge 2} \sigma_n P_n(\cos{\zeta}) \dfrac{\|Q_1\|^n}{\, \, \, \|Q_2\|^{n+1}}\right|\\
               & \le \mu_{1} m_{2} \sum^{\phantom{ssss}}_{n \ge 2} 3^{n} \dfrac{\|Q_1\|^n}{\, \, \, \|Q_2\|^{n+1}}\\
               & \le \dfrac{\mu_{1} m_{2}}{a_{1}^{\vee} 3 (1-e^{\vee}_{1})} \sum^{\phantom{ssss}}_{n \ge 2}  \dfrac{3^{n+1} \alpha^{n+1}}{\, \, \, (1-e_{2}^{\wedge})^{n+1}}\\
               & \le \dfrac{\mu_{1} m_{2}}{a_{1}^{\vee} 3 (1-e^{\vee}_{1})}  \dfrac{3^{3} \alpha^{3}}{\, \, \, (1-e_{2}^{\wedge})^{2}} \dfrac{1}{1-e_{2}^{\wedge}- 3 \alpha}.
\end{align*}\normalsize
The conclusion thus follows when $\alpha< \dfrac{1-e_{2}^{\wedge}}{6}$. In particular, the constant Cst is uniform in the region of the phase space given by the hypothesis in the beginning of this appendix.
\end{proof}

In the following lemma, we regard $F_{pert}$ as a function of Delaunay variables 
$$(L_{1}, l_{1}, L_{2}, l_{2}, G_{1}, g_{1}, G_{2}, g_{2}, H_{1}, h_{1}, H_{2}, h_{2}) \in \mathcal{P}^* \subset \T^6 \times \R^{6},$$
in which $\mathcal{P}^{*}$ is defined, with the hypothesis of this appendix, by further asking that all the Delaunay variables are well defined. All variables are considered as complex, thus $\mathcal{P}^{*}$ is a subset of $T_{\C}=\C^6/\Z^6 \times \C^6$. The modulus of a complex number is denoted by $| \cdot |$ .

\begin{lem}\label{Appendix A: lem 3 ddd} There exists a positive number $s>0$, such that $| F_{pert} | \le \hbox{Cst}\, |\alpha|^{3} $ in the $s$-neighborhood $T_{\mathcal{P}^{*},s}$ of $\mathcal{P}^{*}$  for some constant Cst independent of $\alpha$.
\end{lem}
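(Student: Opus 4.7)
The plan is to upgrade the real estimate of Lemma \ref{Perturbing estimate} to a complex $s$-neighborhood by re-running the same Legendre-series argument, but checking that the series from Lemma \ref{Legendre Expansion} still converges geometrically once all the Delaunay variables are complexified, and that the $\alpha^3$-scaling is preserved.

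First, I would fix $s>0$ small enough so that the Delaunay variables $(L_i, l_i, G_i, g_i, H_i, h_i)$ extend analytically on $T_{\mathcal{P}^*, s}$. On $\mathcal{P}^*$ the quantities $a_1(1-e_1)$ and $a_2(1-e_2)$, and also the numbers $\alpha^\wedge - \alpha$ and $e_i^\wedge - e_i$, are bounded below by positive constants depending only on $m_0, m_1, m_2, e_i^\vee, e_i^\wedge, a_i^\vee, a_i^\wedge$. Since $\|Q_1\|, \|Q_2\|$ are analytic functions of the Delaunay variables on $\mathcal{P}^*$, for $s$ sufficiently small the extensions satisfy $|\,\|Q_i\|\,| \ge \tfrac{1}{2} a_i^\vee (1-e_i^\wedge) > 0$ (so no complex collisions appear), and
\[
\frac{|\,\|Q_1\|\,|}{|\,\|Q_2\|\,|} \le \rho \cdot \frac{1}{\hat\sigma}
\]
for some $\rho < 1$ independent of $\alpha$.

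Next, I would revisit the Legendre bound. On $\mathcal{P}^*$ one has $\cos\zeta = (Q_1 \cdot Q_2)/(\|Q_1\|\|Q_2\|) \in [-1,1]$; after complexification this quantity is analytic and, for $s$ small, stays in a fixed compact set of $\mathbb{C}$. Kellogg's estimate (used in the proof of Lemma \ref{Perturbing estimate}) gives $|P_n(z)| \le (\sqrt{2}+1)^n \le 3^n$ on such a compact set (this is in fact a complex bound in \cite{Kellogg}, valid in a neighborhood of $[-1,1]$). Since $|\sigma_n| \le 1$ exactly as in the real case, each term of the expansion of Lemma \ref{Legendre Expansion} is bounded in modulus by $3^n |\,\|Q_1\|\,|^n / |\,\|Q_2\|\,|^{n+1}$, and the geometric factor from the previous step ensures uniform convergence on $T_{\mathcal{P}^*, s}$.

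Summing the geometric series exactly as in the proof of Lemma \ref{Perturbing estimate}, I obtain
\[
| F_{pert} | \le \mu_1 m_2 \sum_{n \ge 2} 3^n \frac{|\,\|Q_1\|\,|^n}{|\,\|Q_2\|\,|^{n+1}} \le \hbox{Cst}\, |\alpha|^3
\]
on $T_{\mathcal{P}^*, s}$, with Cst depending only on $m_0, m_1, m_2, e_i^\vee, e_i^\wedge, a_i^\vee, a_i^\wedge$ and not on $\alpha$. The one mildly delicate point is the choice of $s$: it must simultaneously keep the Delaunay chart regular, keep $Q_1, Q_2$ away from collisions in the complex domain, and keep $\cos\zeta$ in the region where Kellogg's bound applies, all without spoiling the strict inequality $\max\{\sigma_0, \sigma_1\}\, |\alpha|\, \frac{|1+e_1|}{|1-e_2|} < 1$. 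All these are open conditions satisfied with strict inequalities on $\mathcal{P}^*$, so a common $s$ exists by compactness after suitable localization, which yields the claim.
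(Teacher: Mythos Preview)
Your argument is essentially the paper's: choose $s$ by continuity so that the relevant analytic quantities remain controlled on $T_{\mathcal{P}^*,s}$, bound the Legendre series term by term, and sum geometrically. The one substantive difference is how you justify the complex bound on $|P_n(\cos\zeta)|$. You assert that Kellogg's estimate $|P_n|\le 3^n$ is ``in fact a complex bound in \cite{Kellogg}, valid in a neighborhood of $[-1,1]$''; but the inequality cited there is for real $\cos\zeta$, and you give no argument for the extension. The paper handles this point differently and more cleanly: it takes $s$ small enough that $|\cos\zeta|\le 2$ on $T_{\mathcal{P}^*,s}$, and then uses Bonnet's recursion
\[
(n+1)P_{n+1}(\cos\zeta)=(2n+1)\cos\zeta\,P_n(\cos\zeta)-nP_{n-1}(\cos\zeta)
\]
to prove $|P_n(\cos\zeta)|\le 5^n$ by induction on $n$. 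This is entirely elementary and self-contained, at the cost of the harmless constant $5$ in place of your $3$.

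A secondary imprecision: your bound $|\,\|Q_1\|\,|/|\,\|Q_2\|\,|\le \rho/\hat\sigma$ only secures convergence of the series; it does not by itself yield the factor $|\alpha|^3$. The paper records the $\alpha$-proportional form explicitly, namely $|\,\|Q_1\|\,|/|\,\|Q_2\|\,|\le 4|\alpha|/(1-e_2^\wedge)$, which is what makes the summed series come out as $\hbox{Cst}\,|\alpha|^3$.
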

\begin{proof} By continuity, there exists a positive number $s$, such that in $T_{\mathcal{P}^{*},s}$, we have uniformly
$$
|\cos{\zeta}| \le 2;\,\,
\left|\dfrac{1}{\|Q_{1}\|}\right| \le \dfrac{2}{a_{1}^{\vee}(1-e^{\vee})};\,\,
\left|\dfrac{\|Q_{1}\|}{\|Q_{2}\|} \right| \le \dfrac{4 |\alpha|}{1-e_{2}^{\wedge}}.
$$
in which $\cos{\zeta}$, $\|Q_{1}\|$ and $\|Q_{2}\|$ are considered as the corresponding analytically extensions of the original functions.

Using Bonnet's recursion formula of Legendre polynomials
$$(n+1) P_{n+1}(\cos{\zeta})=(2n+1) \, \cos{\zeta}\, P_{n}(\cos{\zeta})-n P_{n-1}(\cos{\zeta}), $$
by induction on $n$, we obtain $|P_{n}(\cos{\zeta})| \le 5^{n}$.

Thus 
\begin{align*} 
|F_{pert}|&=  \mu_1 m_2 \left|\sum^{\phantom{ssss}}_{n \ge 2} \sigma_n P_n(\cos{\zeta}) \dfrac{\|Q_1\|^n}{\, \, \, \|Q_2\|^{n+1}}\right|\\
               & \le \mu_{1} m_{2}\left|\dfrac{1}{\|Q_{1}\|}\right| \sum^{\phantom{ssss}}_{n \ge 2} 5^{n} \left|\dfrac{\|Q_1\|}{ \, \|Q_2\|}\right|^{n+1}\\
               & \le \dfrac{\mu_{1} m_{2}}{a_{1}^{\vee}5 (1-e^{\vee}_{1})} \sum^{\phantom{ssss}}_{n \ge 2}  \dfrac{5^{n+1} 4^{n+1} |\alpha|^{n+1}}{\, \, \, (1-e_{2}^{\wedge})^{n+1}}\\
               & \le \dfrac{\mu_{1} m_{2}}{a_{1}^{\vee} 5 (1-e^{\vee}_{1})}  \dfrac{20^{3} |\alpha|^{3}}{\, \, \, (1-e_{2}^{\wedge})^{2}} \dfrac{1}{1-e_{2}^{\wedge}- 20 |\alpha|}.
\end{align*}\normalsize
It is then sufficient to impose $\alpha \le \alpha^{\wedge}$ and $s$ small enough to ensure that $|\alpha| \le \dfrac{1-e_{2}^{\wedge}}{40}$. 
\end{proof}

\section{Singularities in the Quadrupolar System} \label{MorseSingularities}

In this appendix, we show that,  for a dense open set of values of parameters $(G_{2}, C, L_{1}, L_{2})$, the singularities $A, B, A', E$ of $F_{quad}(G_{1}, g_1; G_{2}, C, L_{1}, L_{2})$ are of Morse type in the $(G_{1}, g_1)$-space.

Following \cite{LidovZiglin}, we define the normalized variables\footnote{in \cite{LidovZiglin}, it is $\bbdelta^{2}$ (was denoted by $\varepsilon$) which is taken as part of the coordinates.} 
$$\bbalpha=\dfrac{C}{L_1},  \bbbeta=\dfrac{G_2}{L_1}, \bbdelta=\dfrac{G_{1}}{L_{1}}, \bbomega=g_1.$$ 

From section \ref{QuadrupolarDynamics}, we deduce
$$F_{quad}=-\dfrac{k}{\bbbeta^{3}}(\mathcal {W}+\dfrac{5}{3}),$$
in which
$$\mathcal{W}(\bbdelta,\bbomega;\bbalpha,\bbbeta)=-2 \bbdelta^{2}+\dfrac{(\bbalpha^2-\bbbeta^2-\bbdelta^{2})^2}{4 \bbbeta^2} +5 (1-\bbdelta^{2}) \sin^2(\bbomega) (\dfrac{(\bbalpha^2-\bbbeta^2-\bbdelta^{2})^2}{4 \bbbeta^2 \bbdelta^{2}}-1).$$
The coefficient $k$ is independent of $\bbdelta$ and $\bbomega$ and $\bbalpha, \bbbeta$. We shall work with $\mathcal{W}$ from now on.

\begin{lem} \label{Quadrupolar Singularity non-degeneracy} For a dense open set of values of the parameters $(\bbalpha,\bbbeta)$ with $\bbalpha \neq \bbbeta$, all the singularities of the 1-degree of freedom Hamiltonian $\mathcal{W}$ (seen as a function of $(\bbdelta, \bbomega)$) are of Morse type. 
\end{lem}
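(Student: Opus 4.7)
The plan is to leverage the real analytic dependence of the critical points of $\mathcal{W}$ on the parameters $(\bbalpha,\bbbeta)$. On the open parameter region $U_S$ where a given singularity $S\in\{A,B,A',E\}$ exists (described by the inequalities listed in Section \ref{QuadrupolarDynamics}), its coordinates $(\bbdelta_S(\bbalpha,\bbbeta),\bbomega_S(\bbalpha,\bbbeta))$ depend real analytically on $(\bbalpha,\bbbeta)$, either from the explicit formulas or via the implicit function theorem applied to the defining equation (for instance, the cubic in $\bbdelta_B^{2}$ determining $B$). Consequently, the Hessian determinant
$$\Delta_S(\bbalpha,\bbbeta):=\det\bigl(\mathrm{Hess}_{(\bbdelta,\bbomega)}\mathcal{W}\bigr)\bigl(\bbdelta_S(\bbalpha,\bbbeta),\,\bbomega_S(\bbalpha,\bbbeta);\,\bbalpha,\bbbeta\bigr)$$
is a real analytic function on $U_S$. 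By the identity principle, either $\Delta_S$ vanishes identically on each connected component of $U_S$, or its zero set is a proper nowhere-dense analytic subset. Hence it suffices to exhibit, for each $S$, one point of $U_S$ at which $\Delta_S\neq 0$, and then take the (finite) intersection of the resulting open dense subsets of the parameter region $\{\bbalpha\neq\bbbeta\}$.

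For the singularities $A$, $B$, $A'$, which lie on the invariant lines $\bbomega\equiv 0$ or $\pi/2\pmod{\pi}$, the computation simplifies substantially. Writing
$$\mathcal{W}(\bbdelta,\bbomega)=W_0(\bbdelta;\bbalpha,\bbbeta)+\sin^{2}\bbomega\cdot W_1(\bbdelta;\bbalpha,\bbbeta),$$
the mixed partial $\partial_\bbdelta\partial_\bbomega\mathcal{W}=\sin(2\bbomega)\,W_1'(\bbdelta)$ vanishes identically on these lines, so the Hessian at each of $A$, $B$, $A'$ is diagonal, with entries
$$\partial_\bbdelta^{2}\mathcal{W}=W_0''(\bbdelta_S)+\sin^{2}(\bbomega_S)W_1''(\bbdelta_S), \qquad \partial_\bbomega^{2}\mathcal{W}=\pm\,2\,W_1(\bbdelta_S).$$
After eliminating $\bbdelta_S$ via its defining polynomial equation, each entry becomes a rational function of $(\bbalpha,\bbbeta)$, and non-triviality of $\Delta_S$ is established by inspecting a convenient test point or limit (for example $\bbbeta\to 0$, where the one-variable analysis in $\bbdelta$ is elementary).

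For the singularity $E$, which sits on the extremal line $\bbdelta=\bbdelta_{\max}=\min(1,\bbalpha+\bbbeta)$ with a non-trivial $\bbomega$-coordinate, I would use the analytic extension of $\mathcal{W}$ to all $0<\bbdelta<1$ noted in Section \ref{QuadrupolarDynamics}, so that the full $2\times 2$ Hessian at $E$ is defined as a rational function of $(\bbalpha,\bbbeta)$ (in particular $\sin^{2}\bbomega_E$ is itself rational in the parameters, by the explicit formulas). To verify $\Delta_E\not\equiv 0$, I would specialize to a bifurcation value at which $E$ merges with $A$ or $B$, reducing the two-variable determinant to a one-variable computation of the type already carried out.

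The main obstacle is the bookkeeping at $E$: the two different formulas for $\sin\bbomega_E$ according to whether $\bbalpha+\bbbeta<1$ or $\bbalpha+\bbbeta>1$ force separate verifications in the two sub-regions, and the off-diagonal entries of the Hessian at $E$ are a priori non-zero, so one must control the full $2\times 2$ determinant rather than only its diagonal entries. Once each $\Delta_S$ has been shown to be a non-identically-vanishing real analytic function on its parameter region, the identity principle yields the lemma.
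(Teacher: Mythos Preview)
Your overall strategy coincides with the paper's: the Hessian determinant of $\mathcal{W}$ at each singularity is a real analytic function of $(\bbalpha,\bbbeta)$, so by the identity theorem it suffices to exhibit one parameter value where it is non-zero. The paper carries this out by explicit computation (assisted by computer algebra), obtaining closed-form expressions for $\Delta_A$ and for $\Delta_E$ in the sub-case $\bbalpha+\bbbeta>1$, and for the remaining singularities evaluating at a single convenient test point (for $B$ and $A'$, a numerical choice of $(\bbalpha,\bbbeta)$ making the cubic for $\bbdelta_B^{2}$ have prescribed roots; for $E$ with $\bbalpha+\bbbeta\le 1$, the diagonal $\bbbeta=\bbalpha$). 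Your observation that the Hessian is diagonal at $A$, $B$, $A'$ is a clean simplification the paper does not spell out.

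Your proposed treatment of $E$, however, has a genuine gap. You suggest specializing to a bifurcation value at which $E$ merges with $A$ or $B$; but at such a bifurcation the coalesced critical point is \emph{degenerate}, so $\Delta_E=0$ there and the specialization proves nothing. Concretely, on the boundary between regions (1) and (2) with $\bbalpha+\bbbeta>1$, the transition occurs at $3\bbbeta^{2}+\bbalpha^{2}=1$, where $E$ and $A$ coalesce at $(\bbdelta,\bbomega)=(1,0)$; the paper's closed form
\[
\Delta_A=\dfrac{20(\bbalpha^{2}+3\bbbeta^{2}-1)(\bbalpha^{2}-\bbbeta^{2})}{\bbbeta^{2}}
\]
visibly vanishes exactly on that locus. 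Thus the bifurcation limit yields $\Delta_E=0$, not $\Delta_E\neq 0$. To establish $\Delta_E\not\equiv 0$ you must evaluate at an \emph{interior} parameter point of the region where $E$ exists (as the paper does), or else compute $\Delta_E$ in closed form and inspect it directly.
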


\begin{proof} A singularity is of Morse type if, by definition, the Hessian of $\mathcal{W}$ at this point is non-degenerate. By evaluating the determinant of the Hessian of $\mathcal{W}$ with respect to $\bbdelta, \bbomega$ at the corresponding singularity, we get an analytic function of $\bbalpha,\bbbeta$, hence we only need to show that this function is not identically zero. Some of the following results were assisted by Maple 16.

Singularity $A$: The determinant of the Hessian of $W$ at this point is 
$$\dfrac{20 (\bbalpha^{2}+3\bbbeta^{2}-1)(\bbalpha^{2}- \bbbeta^{2})}{\bbbeta^{2} }<0.$$

Singularities $B$ and $A'$: The squares $\bbdelta_{B}^{2}$ of the ordinates $\bbdelta_{B}$ of $B$ and $A'$  are both determined by the same cubic equation
\small
\begin{equation}\label{Eq: cubic B}
x^{3}-(\dfrac{\bbbeta^{2}}{2}+\bbalpha^{2}+\dfrac{5}{8}) x^{2}+\dfrac{5}{8} (\bbalpha^{2}-\bbbeta^{2})^{2}=0.
\end{equation}
\normalsize
In order to make the analysis simple, we set the ordinate of B to $\dfrac{\sqrt{2}}{2}$ and the ordinate of $A'$ to $\dfrac{\sqrt{3}}{2}$. This leads to
\footnotesize
$$\bbalpha=\sqrt{\frac{13}{60}-\frac{\sqrt{2}}{10}}, \quad \bbbeta=\sqrt{\frac{13}{60}+\frac{\sqrt{2}}{10}},$$
\normalsize
which are in the allowable range of values (see Condition (3), Section \ref{QuadrupolarDynamics}). 

The determinants of the Hessian of $W$ at $B:(\bbdelta=\sqrt{2}/2,\bbomega=\pi/2)$ and $A':(\bbdelta=\sqrt{3}/2,\bbomega=\pi/2)$ are respectively $-\frac{51(30\sqrt{2}-5)}{8 (13+12 \sqrt{2})^{2}}$ and $-\frac{7(49560\sqrt{2}-61343)}{512 (13+12 \sqrt{2})^{2}}$. 

Singularity E($\bbalpha+\bbbeta \le 1$): at $\bbbeta=\bbalpha$, the determinant of the Hessian of $W$ at this point is $-\dfrac{10(2 \bbalpha - 1)^{2} (2 \bbalpha-3)}{\bbalpha^{2}}$. 
              
Singularity E($\bbalpha+\bbbeta>1$):  the determinant of the Hessian of $W$ at this point is 
$$-\dfrac{2(3 \bbbeta^{2}-\bbalpha^{2} - 1) (5 \bbalpha^{4}+5\bbbeta^{4}-10 \bbalpha^{2} \bbbeta^{2}-8 \bbalpha^{2}-4\bbbeta^{2}+3)}{\bbbeta^{4}}.$$                       
\end{proof}

In coordinates $(G_{1}, g_1)$, the circle $\{G_{1}=G_{1, min}\}$ corresponds to coplanar motions, and is therefore invariant under any system $\overline{F_{sec}^{n,n'}}$. There are no other singularities near $\{G_{1}=G_{1, min}\}$. Therefore, locally near $\{G_{1}=G_{1, min}\}$ in the 2-dimensional reduced secular space, the flow of $\overline{F_{sec}^{n,n'}}$ is orbitally conjugate to $F_{quad}$. 

\section{Non-degeneracy of the Quadrupolar Frequency Maps} \label{non-degeneracySecularFrequencyMap}

In this appendix, we verify the non-degeneracy of the frequency maps for the quadrupolar system $F_{quad} (G_{1}, g_1, G_{2}; C, L_{1}, L_{2})$ reduced by the $\hbox{SO(3)}$-symmetry (but keep the $\hbox{SO(2)}$-symmetry conjugate to $G_{2}$ unreduced). The calculations is assisted by Maple 16.

We continue to work in the normalized coordinates of  \cite{LidovZiglin}, described at the beginning of Appendix \ref{MorseSingularities}, \emph{i.e.} $$\bbalpha=\dfrac{C}{L_1}, \bbbeta=\dfrac{G_2}{L_1}, \bbdelta=\dfrac{G_{1}}{L_{1}}, \bbomega=g_1.$$ In these coordinates, we have $F_{quad}=\dfrac{k}{\beta^3}(\mathcal{W}+\dfrac{5}{3})$, and
\small
$$\mathcal{W}=-2 \bbdelta^2+\dfrac{(\bbalpha^2-\bbbeta^2-\bbdelta^2)^2}{4 \bbbeta^2} +5 (1-\bbdelta^2) \sin^2 \bbomega \, \left(\dfrac{(\bbalpha^2-\bbbeta^2-\bbdelta^2)^2}{4 \bbbeta^2 \bbdelta^2}-1\right). $$
\normalsize
Let $\overline{\mathcal{W}}(\bbdelta, \bbomega, \bbbeta;\bbalpha)=\dfrac{\mathcal{W}+\frac{5}{3}}{\beta^3}$. This function is now considered as a two degrees of freedom Hamiltonian defined on the 4-dimensional phase space, whose coordinates are $(\bbdelta, \bbomega, \bbbeta, g_{2})$, depending on the parameter $\bbalpha$. We shall formulate our results in terms of $\overline{\mathcal{W}}$, from which the corresponding results for $F_{quad}$ follow directly.

The main idea in the forthcoming proofs is to deduce the existence of torsion of $\overline{\mathcal{W}}$ from a local approximation system $\overline{\mathcal{W}}'(\bbdelta, \bbomega, \bbbeta; \bbalpha)$ whose flow, for fixed $\bbbeta$, is linear in the $(\bbdelta, \bbomega)$-plane. By analyticity, the torsion of $\overline{\mathcal{W}}$ is then non-zero almost everywhere in the corresponding region of the phase space foliated by the continuous family of the Lagrangian tori. 

To obtain the approximating system $\overline{\mathcal{W}}'$, we consider the reduced system $\widetilde{\mathcal{W}}$ of $\overline{\mathcal{W}}$ by fixing $\bbbeta$ and reduced by the $\hbox{SO(2)}$-action conjugate to $\bbbeta$. We either develop $\widetilde{\mathcal{W}}$ into Taylor series of $(\bbdelta, \bbomega)$ at an elliptic singularity and truncate at the second order, or develop $\widetilde{\mathcal{W}}$ into Taylor series of $\bbdelta$ at $\bbdelta=\hbox{Cst}$ and truncate at the first order. In both cases, the torsion of the truncated system amounts to the non-trivial dependence of a certain function of the coefficients of the truncation with respect to $\bbbeta$. 

\begin{lem} \label{Appendix C, lem 1} For a dense open set of values of $\bbalpha$, the frequency mapping of the Lagrangian tori of $\overline{\mathcal{W}}$ is non-degenerate on a dense open subset of the phase space of $\overline{\mathcal{W}}$. 
\end{lem}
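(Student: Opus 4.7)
The plan is to fix the first integral $\bbbeta$ to reduce to a one-degree-of-freedom system, build action--angle coordinates $(I_{1},\varphi_{1})$ in each region foliated by periodic orbits, and deduce the non-degeneracy of the full two-dimensional frequency map from the non-trivial $\bbbeta$-dependence of the leading-order internal frequency.

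Since $\overline{\mathcal{W}}$ is independent of $g_{2}$, the coordinate $\bbbeta$ is a first integral; fixing $\bbbeta$ and reducing the conjugate $\mathrm{SO}(2)$-symmetry yields the one-degree-of-freedom Hamiltonian $\widetilde{\mathcal{W}}(\bbdelta,\bbomega;\bbalpha,\bbbeta)$. In each of the three types of regions of the $(\bbdelta,\bbomega)$-cylinder foliated by periodic orbits of $\widetilde{\mathcal{W}}$ --- around the elliptic singularity $B$, and between $\{\bbdelta=\bbdelta_{\min}\}$ (respectively $\{\bbdelta=\bbdelta_{\max}\}$) and the nearest separatrix --- I will build action--angle coordinates $(I_{1},\varphi_{1})$ for $\widetilde{\mathcal{W}}$ following \cite{Arnold1989}. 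Reassembling with $(\bbbeta,g_{2})$, the full Hamiltonian becomes an analytic function $\overline{\mathcal{W}}=\overline{\mathcal{W}}(I_{1},\bbbeta;\bbalpha)$, and its frequency map $(I_{1},\bbbeta)\mapsto(\partial_{I_{1}}\overline{\mathcal{W}},\partial_{\bbbeta}\overline{\mathcal{W}})$ is non-degenerate exactly when $\det\mathrm{Hess}_{(I_{1},\bbbeta)}\overline{\mathcal{W}}\neq 0$.

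Since this determinant is real-analytic in $(I_{1},\bbbeta,\bbalpha)$, it is enough to exhibit in each type of region a single point at which it does not vanish for some value of $\bbalpha$; analyticity will then propagate non-degeneracy to a dense open subset of the phase space, for a dense open set of parameters $\bbalpha$. To produce such a point I approximate $\widetilde{\mathcal{W}}$ locally by a Hamiltonian $\overline{\mathcal{W}}'$ whose reduced flow is linear in the $(\bbdelta,\bbomega)$-plane. For the rotational regions near $\{\bbdelta=\bbdelta_{\min}\}$ and $\{\bbdelta=\bbdelta_{\max}\}$ I Taylor-expand $\widetilde{\mathcal{W}}$ in $\bbdelta-\bbdelta_{\min}$ (respectively $\bbdelta_{\max}-\bbdelta$) and truncate at first order; the truncation is affine in $\bbdelta$, so its flow is a uniform drift in $\bbomega$ with velocity $g(\bbbeta;\bbalpha)=\partial_{\bbdelta}\widetilde{\mathcal{W}}\bigl|_{\bbdelta=\bbdelta_{\min}}$, yielding $\nu_{1}=g(\bbbeta;\bbalpha)+O(I_{1})$. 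For the libration region around $B$, which is Morse by Lemma~\ref{Quadrupolar Singularity non-degeneracy}, I symplectically diagonalise $\mathrm{Hess}_{(\bbdelta,\bbomega)}\widetilde{\mathcal{W}}$ at $B$; the quadratic truncation is isochronous with frequency $\omega(\bbbeta;\bbalpha)=\sqrt{\det\mathrm{Hess}_{(\bbdelta,\bbomega)}\widetilde{\mathcal{W}}(B)}$ and $\nu_{1}=\omega(\bbbeta;\bbalpha)+O(I_{1})$.

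In both cases the leading-order value of $\nu_{1}$ is independent of $I_{1}$, so the Hessian of the truncation at $I_{1}=0$ takes the form
$$\mathrm{Hess}_{(I_{1},\bbbeta)}\overline{\mathcal{W}}'\Big|_{I_{1}=0}=\begin{pmatrix} 0 & \partial_{\bbbeta}\nu_{1} \\ \partial_{\bbbeta}\nu_{1} & \partial_{\bbbeta}^{2}\overline{\mathcal{W}}'\end{pmatrix},$$
whose determinant is $-(\partial_{\bbbeta}\nu_{1})^{2}$ and hence does not vanish as soon as the leading frequency ($g$ in the rotational case, $\omega$ in the librational case) depends non-trivially on $\bbbeta$. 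It remains to check that the same property survives the passage to the true $\overline{\mathcal{W}}$ at some one point, i.e.\ that the $O(I_{1})$ corrections --- notably the quartic Birkhoff coefficient in the librational case --- do not conspire to cancel $-(\partial_{\bbbeta}\nu_{1})^{2}$. For the rotational regions this is a short direct calculation in the explicit expression of $\widetilde{\mathcal{W}}$. The main obstacle is the librational region, where $\bbdelta_{B}$ is defined only implicitly as a root of the cubic~\eqref{Eq: cubic B}, so the relevant coefficients are algebraic in $(\bbbeta,\bbalpha)$ and a symbolic Maple-assisted computation, analogous to that in Lemma~\ref{Quadrupolar Singularity non-degeneracy}, is required. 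I plan to verify $\det\mathrm{Hess}_{(I_{1},\bbbeta)}\overline{\mathcal{W}}\neq 0$ at the rational specialisation $\bbalpha=\sqrt{13/60-\sqrt{2}/10}$, $\bbbeta=\sqrt{13/60+\sqrt{2}/10}$, where $\bbdelta_{B}^{2}=1/2$ and all quantities become explicitly computable; analyticity then delivers the lemma.
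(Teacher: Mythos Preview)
Your overall strategy matches the paper's: reduce to one degree of freedom, Taylor-expand near $B$ and near $\{\bbdelta=\bbdelta_{\min}\}$ or $\{\bbdelta=\bbdelta_{\max}\}$, read off the leading internal frequency, and show it depends non-trivially on $\bbbeta$ so that the $2\times 2$ Hessian in $(I_{1},\bbbeta)$ has non-zero determinant by analyticity. The librational part near $B$ is essentially the same as the paper's: the paper writes the quadratic truncation as $\Phi+\Xi\,\bbdelta_{1}^{2}+\Upsilon\,\bbomega_{1}^{2}$ and obtains the leading frequency $2\sqrt{\Xi\Upsilon}$, which is exactly your $\sqrt{\det\mathrm{Hess}_{(\bbdelta,\bbomega)}\widetilde{\mathcal{W}}(B)}$.

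There is, however, a genuine gap in your treatment of the rotational regions. You claim that the first-order truncation $\bar\Phi(\bbalpha,\bbbeta)+\bar\Xi\,\bbdelta_{1}$ ``is affine in $\bbdelta$, so its flow is a uniform drift in $\bbomega$ with velocity $g(\bbbeta;\bbalpha)=\partial_{\bbdelta}\widetilde{\mathcal{W}}|_{\bbdelta=\bbdelta_{\min}}$''. But $\bar\Xi$ depends on $\bbomega$ (through $\cos^{2}\bbomega$, as one sees immediately from the expression of $\mathcal{W}$), so the drift is \emph{not} uniform and the leading frequency is \emph{not} $\bar\Xi$ evaluated at any point. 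One must actually build the action variable on the level curve $\bar\Phi+\bar\Xi(\bbomega)\,\bbdelta_{1}=f$, namely
\[
I_{1}=\frac{1}{2\pi}\oint\bbdelta_{1}\,d\bbomega=\frac{f-\bar\Phi}{2\pi}\int_{0}^{2\pi}\frac{d\bbomega}{\bar\Xi(\bbalpha,\bbbeta,\bbomega)},
\]
so that the leading frequency is the harmonic mean $\nu_{1}=2\pi\Bigl(\int_{0}^{2\pi}\bar\Xi^{-1}\,d\bbomega\Bigr)^{-1}$, not $\bar\Xi$ itself. The paper evaluates this integral explicitly via $\int_{0}^{2\pi}\frac{d\bbomega}{a+b\cos\bbomega}=\frac{2\pi}{\sqrt{a^{2}-b^{2}}}$ and then checks the resulting expression depends non-trivially on $\bbbeta$; without this step your ``short direct calculation'' cannot go through.

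A second, smaller point: your plan to rule out cancellation by the quartic Birkhoff coefficient at the particular specialisation $\bbdelta_{B}^{2}=1/2$ is unnecessary work. Since the remainder near $B$ is $O(I_{1}^{3/2})$, its contribution to $\det\mathrm{Hess}_{(I_{1},\bbbeta)}\overline{\mathcal{W}}$ is $O(I_{1})$ and hence is dominated, for $I_{1}$ small, by the $I_{1}$-independent term $-(\partial_{\bbbeta}\nu_{1})^{2}$. This is how the paper handles it, and it spares you the Birkhoff normal-form computation.
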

\begin{proof}
By analyticity of the system, we just have to verify the non-degeneracy in small neighborhoods of the singularity $B$ and $\{\bbdelta=\bbdelta_{min}\}$ or $\{\bbdelta=\bbdelta_{max}=\max\{1, \bbalpha+\bbbeta\}\}$ for the system $\widetilde{\mathcal{W}}$.

In a small neighborhood of $B$ (whose $\bbdelta$-coordinate is denoted by $\bbdelta_{B}$), let $\bbdelta_1=\bbdelta-\bbdelta_B$, $\bbomega_1=\bbomega-\dfrac{\pi}{2}$. We develop $\widetilde{\mathcal{W}}$ into Taylor series of $\bbdelta_{1}$ and $\bbomega_{1}$:
  $$\widetilde{\mathcal{W}}=\Phi(\bbalpha,\bbbeta) + \Xi(\bbalpha,\bbbeta)\,\bbdelta_{1}^2 + \Upsilon(\bbalpha,\bbbeta)\,\bbomega_{1}^2+O\left((|\bbdelta_{1}|^2+|\bbomega_{1}|^2)^{\frac{3}{2}} \right). $$
In which 
\begin{align*}
&\Xi(\bbalpha,\bbbeta)=\dfrac{4 \bbbeta^{2} \bbdelta_{B}^{4}-24 \bbdelta_{B}^{6}+8 \bbdelta_{B}^{4} \bbalpha^{2}+5 \bbdelta_{B}^{4}+15 \bbalpha^{4}-30 \bbalpha^{2} \bbbeta^{2}+15 \bbbeta^{4}}{4 \bbbeta^{5} \bbdelta_{B}^{4}};\\
              &\Upsilon(\bbalpha,\bbbeta)=-\dfrac{5 (\bbdelta_{B}^{2}-1)((\bbalpha+\bbbeta)^{2}-\bbdelta_{B}^{2})((\bbalpha-\bbbeta)^{2}-\bbdelta_{B}^{2})}{4 \bbbeta^{5} \bbdelta_{B}^{4}}.
\end{align*}

From Equation \ref{Eq: cubic B}, we see that $\Upsilon(\bbalpha,\bbbeta) \not\equiv 0$. To show that $\Xi(\bbalpha,\bbbeta)\not\equiv 0$, we just need to use the identity (deduced from Equation \ref{Eq: cubic B}) 
$$15(\bbalpha^{2}-\bbbeta^{2})^{2}=24\bigl(\dfrac{\bbbeta^{2}}{2}+\bbalpha^{2}+\dfrac{5}{8}\bigr)\bbdelta_{B}^{4}-24 \bbdelta_{B}^{6}$$
to write $\Xi(\bbalpha,\bbbeta)$ into the form 
$$\Xi(\bbalpha,\bbbeta)=\dfrac{4  (\bbbeta^{2}+2 \bbalpha^{2}+\dfrac{5}{4}-\bbdelta_{B}^{2})}{\bbbeta^{5} }.$$

Since the singularity $B$ is elliptic, we have $\Xi(\bbalpha,\bbbeta) \Upsilon(\bbalpha,\bbbeta)>0$ for a dense open set of $(\bbalpha, \bbbeta)$. For $f$ close to $\Phi(\bbalpha,\bbbeta)$ when $\Xi >0$ (resp. $\Xi < 0$), the equation $f=\Phi(\bbalpha,\bbbeta) + \Xi(\bbalpha,\bbbeta)\bbdelta^2 + \Upsilon(\bbalpha,\bbbeta)\bbomega^2$ defines an ellipse in the $(\bbespilon,\bbomega)$-plane which bounds an area $\pi \dfrac{h-\Phi}{\sqrt{\Xi \Upsilon}}$, thus we may set $\overline{\mathcal{I}}_1= \dfrac{f-\Phi}{2 \sqrt{\Xi \Upsilon}}$, which is an action variable\footnote{See \cite{Arnold1989} for the method of building action-angle coordinates that we use here.} for the truncating system of $\widetilde{\mathcal{W}}$ up to second order of $\bbdelta_{1}$ and $\bbomega_{1}$. Therefore $W'=\Phi+ 2 \sqrt{\Xi \Upsilon}{}\, \overline{\mathcal{I}}_1+O(\overline{\mathcal{I}}_1^{\frac{3}{2}})$, where $O(\overline{\mathcal{I}}_1^{\frac{3}{2}})$ is a certain function of $\bbalpha, \bbbeta$ and $\overline{\mathcal{I}}_1$, which goes to zero not slower than $\overline{\mathcal{I}}_1^{\frac{3}{2}} \to 0$.

We denote by $\hbox{|Det|}\mathscr{H}(\mathfrak{F})$ the torsion of $\mathfrak{F}$ \emph{i.e.} the absolute value of the determinant of the Hessian matrix of a function $\mathfrak{F}(\overline{\mathcal{I}}_1,\bbbeta)$, \emph{i.e.}
  $$\hbox{|Det|}\mathscr{H}(\mathfrak{F}) \triangleq \left|\dfrac{\partial^2 \mathfrak{F}}{\partial \overline{\mathcal{I}}_1^2}  \dfrac{\partial^2 \mathfrak{F}}{\partial \bbbeta^2}- \Bigl(\dfrac{\partial^2 \mathfrak{F}}{\partial \overline{\mathcal{I}}_1 \partial \bbbeta}\Bigr)^2\right|.$$
 
It is direct to verify that 

$$\hbox{|Det|}\mathscr{H}(O(\overline{\mathcal{I}}_1^{\frac{3}{2}}))=O(\overline{\mathcal{I}}_1),$$
which is of at least the same order of smallness comparing to the quantity $f-\Phi$, which can be made arbitrarily small when restricted to small enough neighborhood of $B$, and 
$$\hbox{|Det|}\mathscr{H}(2 \sqrt{\Xi \Upsilon}{} \, \overline{\mathcal{I}}_1)=4 (\dfrac{\partial \sqrt{\Xi \Upsilon}}{\partial \bbbeta})^{2}.$$ 
This is exactly the torsion of the system $2 \sqrt{\Xi \Upsilon}{} \, \overline{\mathcal{I}}_1$ considered as a system of two degrees of freedom with coordinates $(\bbdelta, \bbomega, \bbbeta, \bar{g}_{2})$.

Therefore in order to prove the statement, it is enough to show that $\dfrac{\partial (\sqrt{\Xi \Upsilon}) }{\partial \bbbeta} \ne 0$\label{Not: quad action variables in the neighborhood of one of the elliptic equilibria} for some $\bbalpha$ and $\bbbeta$. 

Suppose on the contrary that the function $\sqrt{\Xi \Upsilon}$ is independent of $\bbbeta$, then the function $\Xi \Upsilon$ is also independent of $\bbbeta$. In view of the expressions of $\Xi$ and $\Upsilon$, this can happen only if one of the following expressions is a non-zero multiple of $\bbbeta^{\check{c}}$ for some integer $\check{c} \ge 1$:
$$\bbdelta_{B}^{2}-1, \quad \bbbeta^{2}+2 \bbalpha^{2}+\dfrac{5}{4}-\bbdelta_{B}^{2}, \quad \dfrac{1}{\bbdelta_{B}^{4}}, \quad(\bbalpha+\bbbeta)^{2}-\bbdelta_{B}^{2}, \quad (\bbalpha-\bbbeta)^{2}-\bbdelta_{B}^{2}$$
Since $\bbdelta_{B}^{2}$ solves Equation \ref{Eq: cubic B}, we substitute the particular form of $\bbdelta_{B}^{2}$ obtained in each case in Equation \ref{Eq: cubic B}, thus exclude the first two by comparing the constant term, exclude the third by comparing the lowest order term of $\bbbeta$, and exclude the last two by comparing the terms that only depends on $\bbalpha$.
As a result, $\dfrac{\partial (\sqrt{\Xi \Upsilon}) }{\partial \bbbeta}$ is non-zero for a dense open set of values of $(\alpha, \beta)$. 

We now consider the torsion of the tori near the lower boundary $\{\bbdelta=\bbdelta_{min}=|\bbalpha-\bbbeta|>0\}$.

Recall that the function $\widetilde{\mathcal{W}}$ and the coordinates $\bbdelta, \bbomega$ extend analytically to $\{0 < \bbdelta < \bbdelta_{min}\}$. This enables us to develop $\widetilde{\mathcal{W}}$ into Taylor series with respect to $\bbdelta$ at $\bbdelta=\bbdelta_{min}$: set $\bbdelta_{1}=\bbdelta-\bbdelta_{min}$, we obtain
$$\widetilde{\mathcal{W}}=\bar{\Phi}(\bbalpha,\bbbeta) + \bar{\Xi}(\bbalpha,\bbbeta, \bbomega)\,\bbdelta_{1}+O (\bbdelta_{1}^{2}),$$
in which 
$$\bar{\Xi}(\bbalpha,\bbbeta, \bbomega)=-\dfrac{2 \left((9 \bbalpha^2 \bbbeta-6 \bbalpha \bbbeta^2+\bbbeta^3-4 \bbalpha^3+5 \bbalpha)+(-5 \bbalpha+5 \bbalpha^3-10 \bbalpha^2 \bbbeta+5 \bbalpha \bbbeta^2)\cos^{2} \omega\right)}{\bbbeta^4 |\bbalpha-\bbbeta|}.$$

We eliminate the dependence of $\bbomega$ in the linearized Hamiltonian $\bar{\Phi}(\bbalpha,\bbbeta)+\bar{\Xi}(\bbalpha,\bbbeta, \bbomega)\,\bbdelta_{1}$ by computing action-angle coordinates. The value of the action variable $\overline{\mathcal{I}}_{1}$ on the level curve 
$$E_{f}: \bar{\Phi}(\bbalpha,\bbbeta)+\bar{\Xi}(\bbalpha,\bbbeta, \bbomega)\,\bbdelta_{1}=f$$
is computed from the area between this curve and $\bbdelta_{1}=0$, that is
$$\overline{\mathcal{I}}_{1}=\dfrac{1}{2 \pi} \int_{E_{f}} \bbdelta_{1} d \bbomega=\dfrac{f-\bar{\Phi}(\bbalpha,\bbbeta)}{2 \pi} \int_{0}^{2 \pi} \dfrac{1}{\bar{\Xi}(\bbalpha,\bbbeta, \bbomega)} d \bbomega=\overline{\mathcal{I}}_{1}.$$

 We have then
$$\widetilde{\mathcal{W}}=\bar{\Phi}(\bbalpha,\bbbeta) + 2 \pi\left(\int_{0}^{2 \pi} \frac{1}{\bar{\Xi}(\bbalpha,\bbbeta, \bbomega)} d \bbomega\right)^{-1}\,\overline{\mathcal{I}}_{1}+O (\overline{\mathcal{I}}_{1}^{2}).$$

As in the proof of Lemma \ref{Appendix C, lem 1}, for $\overline{\mathcal{I}}_{1}$ small enough, the torsion of $\widetilde{\mathcal{W}}$ is dominated by the torsion of the term linear in $\overline{\mathcal{I}}_{1}$, which is 

$$\left[2 \pi \frac{d}{d \bbbeta} \left(\int_{0}^{2 \pi} \frac{1}{\bar{\Xi}(\bbalpha,\bbbeta, \bbomega)} d \bbomega\right)^{-1}\right]^{2}$$

Using the formula
$$\int_{0}^{2 \pi} \dfrac{d \bbomega}{a + b \cos \bbomega}=\dfrac{2 \pi}{\sqrt{a^{2}-b^{2}}}$$
we obtain
$$2 \pi\left(\int_{0}^{2 \pi} \frac{1}{\bar{\Xi}(\bbalpha,\bbbeta, \bbomega)} d \bbomega\right)^{-1}=-\dfrac{2\sqrt{\bbalpha+\bbbeta} \sqrt{9 \bbalpha^2 \bbbeta-6\bbalpha \bbbeta^2+\bbbeta^3-4 \bbalpha^3+5 \bbalpha}}{\bbbeta^4},$$ 
which depends non-trivially on $\bbbeta$. Therefore the torsion of the system 
$$2 \pi\left(\int_{0}^{2 \pi} \frac{1}{\bar{\Xi}(\bbalpha,\bbbeta, \bbomega)} d \bbomega\right)^{-1}\,\overline{\mathcal{I}}_{1}$$
which is considered as a function of $\bbbeta, \overline{\mathcal{I}}_{1}$, is not identically zero. 

In the case $\bbdelta_{max}(=\min\{1,\bbalpha+\bbbeta\})=\bbalpha+\bbbeta$, since $\overline{\mathcal{W}}$ is an odd function of $\bbbeta$, we may simply replace $\bbbeta$ by $-\bbbeta$ in the formula for tori near $\bbdelta=\bbdelta_{min}$ presented above. The required non-degeneracy follows directly.

In the case $\bbdelta_{max}(=\min\{1,\bbalpha+\bbbeta\})=1$, by the same method, we only have to notice that the function
\footnotesize
\begin{align*}
\footnotesize
&\left(\bbbeta^5 \int_{0}^{2 \pi}\dfrac{d \omega}{(5 \bbalpha^4-10 \bbalpha^2\bbbeta^2-10\bbbeta^2+5-10\bbalpha^2+5\bbbeta^4)\cos^2{\bbomega}+(4\bbbeta^2+8\bbalpha^2-3-5\bbalpha^4+10\bbalpha^2\bbbeta^2-5\bbbeta^4)}\right)^{-1}\\  \normalsize&=\dfrac{\sqrt{(6 \bbbeta^{2}-3 \bbalpha^{2}-2+5 \bbalpha^{4})(\bbbeta^{2}+8 \bbalpha^{2}-3-5 \bbalpha^{4}+10 \bbalpha^{2} \bbbeta^{2})}}{\bbbeta^{5}}
\end{align*}
\normalsize
depends non-trivially on $\bbbeta$. 
\end{proof}

\begin{lem} \label{lowerdimensional non-degeneracy} The frequency map of the elliptic isotropic tori corresponding to the secular singularity $B$ is non-degenerate for a dense open set of values of $(\bbalpha, \bbbeta)$.
\end{lem}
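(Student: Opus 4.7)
The plan is to identify the two frequencies at the elliptic singularity $B$ as explicit analytic functions of the parameters $(\bbalpha,\bbbeta)$ and then show that the Jacobian of the resulting $2\times 2$ map does not vanish identically; by analyticity this gives the claimed non-degeneracy on a dense open set.

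Using the Taylor expansion $\widetilde{\mathcal{W}}=\Phi(\bbalpha,\bbbeta)+\Xi(\bbalpha,\bbbeta)\,\bbdelta_{1}^{2}+\Upsilon(\bbalpha,\bbbeta)\,\bbomega_{1}^{2}+\cdots$ obtained in the proof of Lemma \ref{Appendix C, lem 1}, the singularity $B$ corresponds to an invariant circle of $\overline{\mathcal{W}}$ parametrized by $g_{2}$; along it, the tangential frequency and the elliptic normal frequency are respectively
\[
\nu_{quad,2}(\bbalpha,\bbbeta)=\frac{1}{L_{1}}\,\frac{\partial\Phi}{\partial\bbbeta}(\bbalpha,\bbbeta),\qquad \nu_{quadn,G_{2}}(\bbalpha,\bbbeta)=2\sqrt{\Xi(\bbalpha,\bbbeta)\,\Upsilon(\bbalpha,\bbbeta)},
\]
the second formula being valid on the open region $\{\Xi\Upsilon>0\}$ where $B$ is elliptic. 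Since $\Phi$, $\Xi$, $\Upsilon$ depend analytically on $(\bbalpha,\bbbeta)$ through the branch of $\bbdelta_{B}^{2}$ that solves the cubic \eqref{Eq: cubic B} at the elliptic singularity, the Jacobian
\[
J(\bbalpha,\bbbeta):=\partial_{\bbalpha}\nu_{quad,2}\cdot\partial_{\bbbeta}\nu_{quadn,G_{2}}-\partial_{\bbbeta}\nu_{quad,2}\cdot\partial_{\bbalpha}\nu_{quadn,G_{2}}
\]
is real-analytic on this region, so it suffices to exhibit a single parameter value at which $J\neq 0$.

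I would evaluate $J$ at the convenient reference point of Lemma \ref{Quadrupolar Singularity non-degeneracy}, namely $\bbdelta_{B}=\sqrt{2}/2$ with $\bbalpha=\sqrt{13/60-\sqrt{2}/10}$ and $\bbbeta=\sqrt{13/60+\sqrt{2}/10}$. The partial derivatives $\partial_{\bbalpha}\bbdelta_{B}$ and $\partial_{\bbbeta}\bbdelta_{B}$ are extracted by implicit differentiation of \eqref{Eq: cubic B} and substituted into the explicit expressions for the first partial derivatives of $\Phi$ and of $\sqrt{\Xi\Upsilon}$; then $J$ is evaluated symbolically. The main obstacle is purely computational: chaining the implicit derivatives through the cubic produces rather unwieldy rational expressions, so — consistently with the rest of this appendix — this final verification is naturally delegated to Maple~16. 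If the prescribed reference point happens to lie on the zero locus of $J$, one picks any other admissible $\bbdelta_{B}$ satisfying Condition (3) of Section \ref{QuadrupolarDynamics} and repeats the check; once $J\neq 0$ at a single point, analyticity propagates the conclusion to a dense open set of values of $(\bbalpha,\bbbeta)$.
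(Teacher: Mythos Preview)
Your approach is essentially the same strategy as the paper's---identify the tangential and normal frequencies at $B$ as $\partial_{\bbbeta}\Phi$ and $2\sqrt{\Xi\Upsilon}$, then invoke analyticity and a pointwise check---but you are more explicit about which determinant must be nonvanishing. The paper phrases the argument as a limit: the frequency map of the elliptic torus at $B$ is the $\overline{\mathcal{I}}_{1}\to 0$ limit of the Lagrangian frequency map from Lemma~\ref{Appendix C, lem 1}, and since the approximating Hamiltonian $\Phi+2\sqrt{\Xi\Upsilon}\,\overline{\mathcal{I}}_{1}$ has frequencies independent of $\overline{\mathcal{I}}_{1}$, its Lagrangian frequency map already encodes the lower-dimensional one. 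The paper then defers to ``the same reasoning and calculations'' as in Lemma~\ref{Appendix C, lem 1} without writing out your $(\bbalpha,\bbbeta)$-Jacobian $J$. Your formulation makes clearer that the condition actually needed here---nonvanishing of $J$---is not literally the torsion condition $(\partial_{\bbbeta}\sqrt{\Xi\Upsilon})^{2}\neq 0$ verified in Lemma~\ref{Appendix C, lem 1}, but a related $2\times 2$ determinant involving $\bbalpha$-derivatives as well; the paper's brevity glosses over this distinction.

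One genuine gap in your write-up: the sentence ``if the prescribed reference point happens to lie on the zero locus of $J$, one picks any other admissible $\bbdelta_{B}$\ldots and repeats the check'' is not a proof. You must actually execute the Maple evaluation and report the nonzero value (as the paper does elsewhere in Appendix~\ref{MorseSingularities}), or else give a structural argument that $J\not\equiv 0$ (as the paper did for $\partial_{\bbbeta}\sqrt{\Xi\Upsilon}$ via the cubic \eqref{Eq: cubic B}). As written, the argument is conditional and therefore incomplete.
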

\begin{proof} Following from the previous proof, we only need to note in addition that the secular frequency map of the elliptic isotopic tori corresponding to the secular singularity $B$ is the limit of the secular frequency map of the Lagrangian tori around $B$: At the limit, the frequency of these tori with respect to $\overline{\mathcal{I}}_{1}$ becomes the normal frequency of the lower dimensional secular tori corresponds to $B$, and the frequency with respect to $G_{2}$ becomes the tangential frequency of the lower dimensional tori. We see that the frequency of the approximating Hamiltonian $2 \sqrt{\Xi \Upsilon}{}\, \overline{\mathcal{I}}_1$ is independent of $\overline{\mathcal{I}}_1$, hence its frequency map for Lagrangian tori near the lower dimensional tori gives in the same time the frequency map for the lower dimensional tori. By the same reasoning and calculations as in the proof of Lemma \ref{Appendix C, lem 1}, the non-degeneracy condition of the secular frequency holds for a dense open set in the $(\bbalpha, \bbbeta)$-space. 
 \end{proof}

\begin{ack} The results of this article is part of my Ph.D. thesis \cite{ZLthesis} prepared at the Paris Observatory and the Paris Diderot University. Many thanks to my supervisors, Alain Chenciner and Jacques Féjoz, for their generous and crucial help during these years.
Sincerely thanks to Jesús Palacian for helpful comments on some preliminary version of this article.
\end{ack}

\bibliography{QuasiperiodicMotionSpatial}
\end{document}